\setlist{nolistsep}
\date{\today}
\date{ }
\numberwithin{equation}{section} \numberwithin{figure}{section}
\numberwithin{table}{section}
\theoremstyle{plain}
\newtheorem{scheme}{Scheme}[section]
\newtheorem{thm}{Theorem}[section]
\theoremstyle{remark}
\newtheorem{rem}{Remark}[section]
\newtheorem{defi}{Definition}[section]
\newcommand{\bu}{{\mathbf u}}
\newcommand{\beql}{\begin{equation*}}
\newcommand{\eeql}{\end{equation*}}
\newcommand{\bse}{\begin{subequations}}
\newcommand{\ese}{\end{subequations}}
\def\benl{\begin{eqnarray*}}
\def\eenl{\end{eqnarray*}}
\def\bA{\mathbf{A}}
\def\cF{\mathcal{F}}
\def\cA{\mathcal{A}}
\def\cB{\mathcal{B}}
\def\bu{\mathbf{u}}
\def\bx{\mathbf{x}}
\def\bM{\mathbf{M}}
\def\bu{\mathbf{u}}
\def\cL{\mathcal{L}}
\def\cM{\mathcal{M}}
\def\cG{\mathcal{G}}
\newcommand{\ben}{\begin{eqnarray}}
\newcommand{\een}{\end{eqnarray}}
\newcommand{\beq}{\begin{equation}}
\newcommand{\eeq}{\end{equation}}
\newcommand{\bea}{\begin{array}}
\newcommand{\eea}{\end{array}}
\newcommand{\bef}{\begin{figure}}
\newcommand{\eef}{\end{figure}}
\begin{document}
\title{Arbitrarily High-order Linear Schemes for Gradient Flow Models}
\author{Yuezheng Gong \thanks{College of Science, Nanjing University of Aeronautics and Astronautics, Nanjing 210016, China; Email: gongyuezheng@nuaa.edu.cn},
Jia Zhao \thanks{Department of Mathematics \& Statistics, Utah State University, Logan, UT 84322, USA; Email: jia.zhao@usu.edu}
and Qi Wang \thanks{Corresponding author. Beijing Computational Science Research Center, Beijing 100193, China; Department of Mathematics, University of South Carolina, Columbia, SC 29208, USA Email: qwang@math.sc.edu}}

\maketitle
\begin{abstract}
We present a paradigm for developing arbitrarily high order, linear, unconditionally energy stable numerical algorithms for gradient flow models. We apply the energy quadratization (EQ) technique to reformulate the general gradient flow model into an equivalent gradient flow model with a quadratic free energy and a modified mobility. Given solutions up to $t_n=n \Delta t$ with $\Delta t$ the time step size, we linearize the EQ-reformulated gradient flow model in $(t_n, t_{n+1}]$ by extrapolation. Then we employ an algebraically stable Runge-Kutta method to discretize the linearized model in $(t_n, t_{n+1}]$. Then we use the Fourier pseudo-spectral method for the spatial discretization to match the order of accuracy in time. The resulting fully discrete scheme is linear, unconditionally energy stable, uniquely solvable, and may reach arbitrarily high order. Furthermore, we present a family of linear schemes based on prediction-correction methods to complement the new linear schemes. Some benchmark numerical examples are given to demonstrate the accuracy and efficiency of the schemes.

{\bf Keywords:} Energy stable schemes, gradient flow models, Runge-Kutta methods, linear high-order schemes, pseudo-spectral methods.
\end{abstract}

\section{Introduction}\label{sect:Intro}

For many import phenomena in physics, life science, and engineering, the processes are driven by minimizing the free energy or maximizing entropy, i.e., dissipative dynamics. Gradient flow models are usually used to model these phenomena. The generic form of a gradient flow model is given by
\beq \label{eq:Model}
\frac{\partial}{\partial t}\Phi = \cG \frac{\delta F}{\delta \Phi},
\eeq
with proper boundary conditions. Here $\Phi$ is the thermodynamical variable, $ F$ is the free energy for the isothermal system (or entropy for the nonisothermal system),  and $\cG$ is the mobility operator/matrix.
The gradient flow model is thermodynamically consistent if it yields a positive entropy production or negative energy dissipation rate. The classical Allen-Cahn equation \cite{AlCa77}  and Cahn-Hilliard equation \cite{Cahn&H1958} are two examples of gradient flow models \eqref{eq:Model}. Other gradient flow models include the molecular beam epitaxy model \cite{Clar1987}, the phase-field crystal model \cite{PhaseFieldCrystal}, the thermodynamically consistent dendritic growth model \cite{McFadden1993}, the surfactant model \cite{Teng&Chern&LaiDSDC2012},  the diblock copolymer model \cite{Yang-EQ-diblock} etc.

Most gradient flow models are nonlinear, so that their analytical solutions are intractable. Hence, designing accurate, efficient, and stable algorithms to solve them becomes essential \cite{Eyre1998,Furihata2011,WiseSINUMA2009,Shen10_1,Zhao2018EQreview,Wang&Wang&WiseDCDS2010,Zhang&QiaoJCP2013-Adaptive-PFC}. A numerical scheme that preserves the energy dissipation property is known as an energy stable scheme \cite{Eyre1998}. It has been shown that schemes that are not energy stable could lead to instability or oscillatory solutions \cite{Furihata2011}. This is because non-energy-stable schemes may introduce truncation errors that destroy the physical law numerically. Thus, developing energy stable numerical algorithms is necessary for accurately resolving the dynamics of gradient flow models \cite{Guillen&Tierra2014,LaiEAJAM2013,Guo&Lin&Lowengrub&Wise2017,SAV-1,Zhao2018EQreview}.

Over the years, the development of numerical algorithms has been done primarily on a specific gradient flow model, exploiting its specific mathematical properties and structures. The noticeable ones include the Allen-Cahn and Cahn-Hilliard equation \cite{Eyre1998,Bar2001,Ye03,Kim2004,Wise&Kim&LowengrubJCP2007,Shen10_1,Guillen&Tierra2014,DongSSAV} as well as the molecular beam epitaxy model \cite{Qiao&Zhang&TangSISC2011,YZW2017,Time-Adaptive-AML-2020,ChenJSC2012,QiaoMC2015,Feng&Wang&Wise&Zhang2017}. As a result,  most numerical algorithms developed for a specific gradient flow model can hardly be applied to another gradient flow model with a different free energy and mobility. The status-quo did not change until  the energy quaratization (EQ) approach was introduced a few years ago \cite{Yang-EQ-homopolymer,Zhao2018EQreview}, which turns out to be  rather general so that it can be readily applied to general  gradient flow models with no restrictions on the specific form of the mobility and free energy \cite{Yang-EQ-bending,Gong&Zhao&WangACM} (so long as it is bounded below, which is usually the case). Based on the idea of EQ, the scalar auxiliary variable (SAV) approach was introduced later \cite{Shen2019SAVreview}, where each time step only linear systems with constant coefficients need to be solved.  Several other extensions of EQ and SAV approaches have been explored further. For instance, regularization terms \cite{Chen&Zhao&Yang2018} and stabilization terms \cite{SIEQ} are added, and the modified energy quadratization technique \cite{EQ_Li,SAV_Xu} are introduced to improve the EQ methodology.  We notice that most existing schemes related to the EQ methodology are up to 2nd order accurate in time. Some higher-order ETD schemes have been introduced to solve the MBE model and Cahn-Hilliard models recently, but their theoretical proofs of energy stability are still missing.   Shen et al. \cite{SAV-1} remarked on higher-order BDF schemes using the SAV approach, but no rigorous theoretical proofs are available.

Recently,  Gong et al. \cite{Gong&Zhao&WangSISC3,Gong&Zhao&Wang-SAV,GongEnergy} introduced the arbitrarily high-order schemes for solving gradient flow models by combining the methodology of energy quadratization with quadratic invariant Runge-Kutta (RK) methods.  This seminal idea sheds light on solving gradient flow models with an arbitrarily high order of accuracy. We note that the proposed high order schemes are fully nonlinear so that the solution existence and uniqueness are not guaranteed when the time step is large. Moreover, the implementation can be complicated compared to linear schemes. Often, it requires iterations at each time step, which adds up to the computational cost.

In this paper, we will address these issues by proposing a new paradigm for developing arbitrarily high order schemes that are unconditionally energy stable and linear. As a significant advance over our previous work, the newly proposed paradigm would result in linear schemes, while preserving unconditional energy stability. These newly proposed schemes will bring in significant improvement in numerical implementation and reduction in computational cost. First of all, as the schemes are all linear,  only linear systems need to be solved at each time step. Therefore, they are easy to implement and computationally efficient. In addition, the existence and uniqueness of the numerical solutions can be guaranteed, which is actually independent of the time steps. In other words, larger time steps can be readily applied so long as the accuracy requirement is met. This is warranted by the existence of a unique solution and unconditionally energy stable.  Equipped with the benefits of EQ method and RK method, the newly proposed schemes apply to general gradient flow models. For illustration purposes, we solve the Cahn-Hilliard model and the molecular beam epitaxy model to demonstrate the effectiveness of some selected new schemes. To show their accuracy and efficiency, we also compare two proposed schemes with the $2$nd order convex splitting schemes. 

The rest of this paper is organized as follows. In Section 2, we briefly introduce the general gradient flow model and use the EQ method to reformulate it into an equivalent form. In Section 3, the arbitrarily high-order linear energy stable schemes are introduced, and their energy stability and uniquely solvability are discussed.  In Section 4, several numerical examples are shown to illustrate the power of our proposed arbitrarily high order schemes. In the end, some concluding remarks are given.

\section{Gradient Flow Models and Their EQ Reformulation}\label{sec:Model}

In this section, we present the general gradient flow model firstly and then apply the energy quadratization technique to reformulate the model into an equivalent gradient flow form with a quadratic energy functional, a modified mobility matrix and the corresponding energy dissipation law, which is called the EQ reformulated model. The EQ reformulation for this class of gradient flow models provides an elegant platform for developing arbitrarily high-order unconditionally energy stable schemes \cite{Gong&Zhao&WangSISC3,GongEnergy}.

\subsection{Gradient flow models}

Mathematically, the form of a  general gradient flow model is given by \cite{Zhao2018EQreview,Shen2019SAVreview}
\beq \label{eq:gradient-flow}
\frac{\partial}{\partial t}\Phi = \cG \frac{\delta F}{\delta \Phi},
\eeq
where $\Phi=(\phi_1,\cdots,\phi_d)^T$ is the state variable  vector, $\cG$ is the $d \times d$ mobility matrix operator which can depend on $\Phi$, $F$ is the  free energy, and $\frac{\delta F}{\delta \Phi}$ is the variational derivative of the free energy functional with respect to the state variable, known as the chemical potential. The triple $(\Phi,\cG,F)$ uniquely defines the gradient flow model. For \eqref{eq:gradient-flow} to be thermodynamically consistent, the time rate of change of the free energy must be non-increasing:
\beq\label{initialEDL}
\frac{d F}{d t} = \left( \frac{\delta F}{\delta \Phi}, \frac{\partial\Phi}{\partial t} \right) = \left( \frac{\delta F}{\delta \Phi}, \cG \frac{\delta F}{\delta \Phi} \right) \leq 0,
\eeq
where the inner product is defined by $({\bf f}, {\bf g}) = \sum\limits_{i=1}^d \int_\Omega f_i g_i d\bx$, $\forall \mathbf{f}, \mathbf{g} \in \big(L^2(\Omega)\big)^d$, which requires $\cG$ to be  negative semi-definite. The $L^2$ norm is defined as $\| \bf f\|_2 = \sqrt{({\bf f}, {\bf g})}$. Note that the energy dissipation law \eqref{initialEDL} holds only for suitable boundary conditions. Such boundary conditions include periodic boundary conditions and the other boundary conditions that make the boundary integrals resulted from the integration by parts vanish in the calculation of variational derivatives. In this paper, we limit our study to these boundary conditions.

\subsection{Model reformulation using the EQ approach}

We reformulate  the gradient flow model \eqref{eq:gradient-flow} by transforming the free energy into a quadratic form using nonlinear transformations. For the purpose  of illustration, we assume the  free energy is given by  the following
\beq\label{initial-energy}
F = \frac{1}{2}(\Phi, \cL \Phi ) + \big( f (\Phi,\nabla \Phi) ,1\big),
\eeq
where $\cL$ is a linear, self-adjoint, positive semi-definite operator (independent of $\Phi$), and $f$ is the bulk part of the free energy density, which has a lower bound. Then the  free energy $F$ can be rewritten into a quadratic form
\beq\label{EQ-energy}
\cF = \frac{1}{2}(\Phi, \cL \Phi) + \|q\|^2 - C |\Omega|,
\eeq
by introducing an auxiliary variable $q = \sqrt{f (\Phi,\nabla \Phi) +C}$, where $C$ is a positive constant large enough to make $q$ real-valued for all $\Phi$.

We denote $g[\Phi ] = \sqrt{f (\Phi,\nabla \Phi) +C}$. Then model \eqref{eq:gradient-flow} can be reformulated into the following equivalent system
\beq \label{eq:gradient-flow-EQ-scalar}
\left\{
\bea{l}
\Phi_t = \cG \bigg(\cL \Phi + 2q \frac{\partial g}{\partial \Phi} - \nabla\cdot \Big( 2q \frac{\partial g}{\partial \nabla\Phi} \Big) \bigg), \\
q_t = \frac{\partial g}{\partial \Phi} \cdot \Phi_t + \frac{\partial g}{\partial \nabla\Phi} \cdot \nabla\Phi_t,
\eea
\right.
\eeq
with initial conditions
\beq
\Phi(\bx,0)=\Phi_0(\bx), \quad q(\bx, 0)=\sqrt{f \big(\Phi_0(\bx),\nabla \Phi_0(\bx)\big) +C}.
\eeq
It is readily to prove that the reformulated system \eqref{eq:gradient-flow-EQ-scalar} preserves the following energy dissipation law
\beq
\frac{d \cF}{d t} = \Bigg(\cL \Phi + 2q \frac{\partial g}{\partial \Phi} - \nabla\cdot \Big( 2q \frac{\partial g}{\partial \nabla\Phi} \Big),  \cG \bigg(\cL \Phi + 2q \frac{\partial g}{\partial \Phi} - \nabla\cdot \Big( 2q \frac{\partial g}{\partial \nabla\Phi} \Big) \bigg)\Bigg) \leq 0. \label{EDP-EQ}
\eeq

We introduce
\beq
\bu=(\Phi, q)^T
\eeq
and recast  system \eqref{eq:gradient-flow-EQ-scalar} into a compact gradient flow form
\beq
\bu_t=\cM \frac{\delta \cF}{\delta \bu},
\eeq
with a modified mobility operator
\beq
\cM=
\left (
\bea{c}
1\\
\frac{\partial g}{\partial \Phi}+\frac{\partial g}{\partial \nabla \Phi}\cdot \nabla\eea
\right) \cG\left (1, \frac{\partial g}{\partial \Phi}-\nabla \cdot \frac{\partial g}{\partial \nabla \Phi}\right).
\eeq
The energy dissipation law given in \eqref{EDP-EQ} is recast into
\ben
\frac{d \cF}{d t} = \Big(\frac{\delta \cF}{\delta \bu}, \cM \frac{\delta \cF}{\delta \bu} \Big)  \leq 0.
\een
Since the EQ-reformulated form in \eqref{eq:gradient-flow-EQ-scalar} has a quadratic free energy, we next discuss how to devise linear high-order energy stable schemes for it.

\section{High-order linear energy stable schemes}\label{sec:Method}

In this section, we first derive a high-precision linear gradient-flow system to approximate EQ-reformulated model  \eqref{eq:gradient-flow-EQ-scalar} up to $t_n=n\Delta t$, where $\Delta t$ is the time step. In particular, the corresponding energy dissipation law is inherited. Then the algebraically stable RK method \cite{KevinStability} is applied to the resulting linear gradient-flow system to develop a class of linear semi-discrete schemes in time. We name the schemes linear energy quadratizatized Runge-Kutta (LEQRK) methods. In order to improve accuracy and stability, a prediction-correction technique is proposed for the LEQRK schemes, leading to the LEQRK-PC methods. These new algorithms are linear, unconditionally energy stable, and can be devised at any desired order in time.

\subsection{LEQRK schemes}
Assuming numerical solutions of $\Phi$ up to  $t\leq t_n$ have been obtained, we then solve system \eqref{eq:gradient-flow-EQ-scalar} in $t \in (t_n,t_{n+1}]$ approximately. We utilize  the numerical solutions of $\Phi$ at $t\leq t_n$ to obtain its interpolating polynomial approximation denoted by $\Phi_N(t)$. Then we approximate model \eqref{eq:gradient-flow-EQ-scalar} in $(t_n,t_{n+1}]$ using the following linear, variable coefficient gradient flow system
\beq \label{linearPDE}
\left\{
\bea{l}
\Phi_t = \cG \bigg(\cL \Phi + 2q \frac{\partial g_N}{\partial \Phi} - \nabla\cdot \Big( 2q \frac{\partial g_N}{\partial \nabla\Phi} \Big) \bigg), \\
q_t = \frac{\partial g_N}{\partial \Phi} \cdot \Phi_t + \frac{\partial g_N}{\partial \nabla\Phi} \cdot \nabla\Phi_t,
\eea
\right.
\eeq
where $\frac{\partial g_N}{\partial \Phi} = \frac{\partial g}{\partial \Phi}[\Phi_N(t)]$ and $\frac{\partial g_N}{\partial \nabla\Phi} = \frac{\partial g}{\partial \nabla\Phi}[\Phi_N(t)]$ are independent of $\Phi$. The linear gradient flow system \eqref{linearPDE} satisfies the following energy dissipation law
\beq
\frac{d \cF}{d t} = \Bigg(\cL \Phi + 2q \frac{\partial g_N}{\partial \Phi} - \nabla\cdot \Big( 2q \frac{\partial g_N}{\partial \nabla\Phi} \Big),  \cG \bigg(\cL \Phi + 2q \frac{\partial g_N}{\partial \Phi} - \nabla\cdot \Big( 2q \frac{\partial g_N}{\partial \nabla\Phi} \Big) \bigg)\Bigg) \leq 0.
\eeq

Applying a $s$-stage RK method for the linear system \eqref{linearPDE}, we obtain the following LEQRK scheme.

\begin{scheme}[$s$-stage LEQRK Scheme] \label{scheme:LEQRK}
Let $b_i$, $a_{ij}$ ($i,j = 1,\cdots,s$) be real numbers and let $c_i = \sum\limits_{j=1}^s a_{ij}$.
For given $(\Phi^n, q^n)$  and $\Phi_N(t_n+c_i \Delta t), \forall i$, the following intermediate values are first calculated by
\beq \label{RK_stage}
\left\{
\bea{l}
\Phi_i^n =  \Phi^n +  \Delta t \sum\limits_{j=1}^s a_{ij} k_j^n,\\
Q_i^n = q^n +  \Delta t \sum\limits_{j=1}^s a_{ij} l_j^n,\\
k_i^n = \cG \bigg(\cL \Phi_i^n + 2Q_i^n  \Big(\frac{\partial g}{\partial \Phi}\Big)_i^{n,*}  - \nabla\cdot \Big( 2Q_i^n \Big(\frac{\partial g}{\partial \nabla\Phi}\Big)_i^{n,*} \Big) \bigg), \\
l_i^n = \Big(\frac{\partial g}{\partial \Phi}\Big)_i^{n,*}  \cdot k_i^n + \Big(\frac{\partial g}{\partial \nabla\Phi}\Big)_i^{n,*} \cdot \nabla k_i^n,
\eea
\right.
\quad i = 1,\cdots,s,
\eeq
where $\Big(\frac{\partial g}{\partial \Phi}\Big)_i^{n,*} =  \frac{\partial g}{\partial \Phi}[\Phi_N(t_n+c_i \Delta t)]$ and $\Big(\frac{\partial g}{\partial \nabla\Phi}\Big)_i^{n,*} = \frac{\partial g}{\partial \nabla\Phi}[\Phi_N(t_n+c_i \Delta t)]$.
Then $(\Phi^{n+1}, q^{n+1})$ is updated via
\ben
&& \Phi^{n+1} = \Phi^n +  \Delta t \sum\limits_{i=1}^s b_i k_i^n ,\label{update_Phi}\\
&& q^{n+1} = q^n +  \Delta t \sum\limits_{i=1}^s b_i l_i^n.
\een
\end{scheme}

\begin{defi} [Algebraically Stable RK Method \cite{KevinStability}]
Denote a symmetric matrix $\mathbf{M}$ with the elements
$
\mathbf{M}_{ij} = b_i a_{ij} + b_j a_{ji} - b_i b_j.
$
A RK method is said to be algebraically stable if its RK coefficients satisfy stability condition \beq\label{stability-condition}
b_i\geq 0,~\forall i=1,2,\cdots,s, \quad \textrm{and~} \mathbf{M} \textrm{~is~positive~semi-definite}.
\eeq
\end{defi}

Next, we show that the algebraically stable LEQRK scheme is unconditionally energy stable.

\begin{thm} \label{thm:Energy-Stability}
The LEQRK scheme with their RK coefficients satisfying stability condition \eqref{stability-condition} is unconditionally energy stable, i.e., it satisfies
\beq
\cF^{n+1}\leq \cF^n,
\eeq
where $\cF^n = \frac{1}{2}(\Phi^n, \cL \Phi^n) + \|q^n\|^2 - C |\Omega|.$
\end{thm}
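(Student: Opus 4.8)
The strategy is the standard energy argument for algebraically stable Runge--Kutta methods applied to a gradient flow, adapted here to the linearized EQ system \eqref{linearPDE}. First I would compute the energy increment $\cF^{n+1}-\cF^n$ directly from the definition $\cF^n = \frac12(\Phi^n,\cL\Phi^n) + \|q^n\|^2 - C|\Omega|$. Using the update formulas \eqref{update_Phi} for $\Phi^{n+1}$ and $q^{n+1}$ and bilinearity of the inner product (together with self-adjointness of $\cL$), one gets
\beq
\cF^{n+1}-\cF^n = \Delta t\sum_{i=1}^s b_i \Big[(\Phi^n,\cL k_i^n) + 2(q^n, l_i^n)\Big] + \Delta t^2 \sum_{i,j=1}^s b_ib_j\Big[\tfrac12(k_i^n,\cL k_j^n) + (l_i^n, l_j^n)\Big]. \me
\eeq

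Next I would eliminate $\Phi^n$ and $q^n$ in favor of the stage values using \eqref{RK_stage}: since $\Phi^n = \Phi_i^n - \Delta t\sum_j a_{ij}k_j^n$ and $q^n = Q_i^n - \Delta t\sum_j a_{ij}l_j^n$, substituting into the linear term and collecting the $\Delta t^2$ contributions produces exactly the quadratic form associated to the matrix $\mathbf{M}_{ij} = b_ia_{ij} + b_ja_{ji} - b_ib_j$. The outcome is the identity
\beq
\cF^{n+1}-\cF^n = \Delta t\sum_{i=1}^s b_i\Big[(\Phi_i^n,\cL k_i^n) + 2(Q_i^n, l_i^n)\Big] - \Delta t^2\sum_{i,j=1}^s \mathbf{M}_{ij}\Big[\tfrac12(k_i^n,\cL k_j^n) + (l_i^n, l_j^n)\Big]. \me
\eeq
The second term is $\le 0$ because $\mathbf{M}$ is positive semi-definite (test it against the vector of ``components'' $k_i^n$ in the $\cL$-weighted inner product, and against $l_i^n$ in the $L^2$ inner product; each gives a nonnegative number, so the whole sum is nonnegative and enters with a minus sign).

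For the first term I would show each summand is nonpositive. From the definition of $l_i^n$ in \eqref{RK_stage}, $(Q_i^n, l_i^n) = \big(Q_i^n, (\tfrac{\partial g}{\partial\Phi})_i^{n,*}\cdot k_i^n\big) + \big(Q_i^n, (\tfrac{\partial g}{\partial\nabla\Phi})_i^{n,*}\cdot\nabla k_i^n\big)$; integrating the second piece by parts (the boundary terms vanish by the admissible boundary conditions assumed in Section~2) converts it to $-\big(\nabla\cdot(Q_i^n(\tfrac{\partial g}{\partial\nabla\Phi})_i^{n,*}), k_i^n\big)$. Hence $(\Phi_i^n,\cL k_i^n) + 2(Q_i^n,l_i^n) = \big(\cL\Phi_i^n + 2Q_i^n(\tfrac{\partial g}{\partial\Phi})_i^{n,*} - \nabla\cdot(2Q_i^n(\tfrac{\partial g}{\partial\nabla\Phi})_i^{n,*}),\, k_i^n\big)$, and since $k_i^n = \cG(\cdot)$ where $(\cdot)$ is precisely that bracketed quantity, this equals $(w_i^n, \cG w_i^n)$ with $w_i^n$ the EQ-chemical potential at stage $i$. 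This is $\le 0$ because $\cG$ is negative semi-definite, and multiplying by $b_i\ge 0$ preserves the sign. Summing over $i$ shows the first term is $\le 0$, and combined with the $\mathbf{M}$-term we conclude $\cF^{n+1}\le\cF^n$.

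The main obstacle is purely bookkeeping: carefully carrying out the substitution of $\Phi^n, q^n$ by the stage values so that the algebraic identity for $\mathbf{M}$ emerges cleanly, and keeping the $\cL$-weighted and $L^2$ inner products organized so that positive semi-definiteness of $\mathbf{M}$ applies to the combined expression $\tfrac12(k_i^n,\cL k_j^n) + (l_i^n,l_j^n)$ as a single quadratic form. No genuine analytic difficulty arises, since linearity of \eqref{linearPDE} means the stage equations are exact and the only structural facts needed---$\cL$ self-adjoint positive semi-definite, $\cG$ negative semi-definite, $\mathbf{M}$ positive semi-definite---are all in hand.
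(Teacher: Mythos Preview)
Your proposal is correct and follows essentially the same argument as the paper: expand the energy increment using the RK update formulas, substitute the stage relations to produce the $\mathbf{M}_{ij}$-weighted quadratic form, discard that term using positive semi-definiteness of $\mathbf{M}$ (and of $\cL$), and then recognize the remaining stage sums as $b_i(w_i^n,\cG w_i^n)\le 0$ via the definitions of $k_i^n$, $l_i^n$ and an integration by parts. The only cosmetic difference is that the paper treats the $\Phi$- and $q$-parts of the energy separately (and invokes a factorization $\cL=\cA^*\cA$ to justify nonnegativity of $\sum_{i,j}\mathbf{M}_{ij}(k_i^n,\cL k_j^n)$), whereas you bundle them together; the logical content is identical.
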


\begin{proof}
Denoting $\Phi^{n+1} = \Phi^n + \Delta t \sum\limits_{i=1}^s b_i k_i^n$ and noticing that operator $\cL$ is linear and self-adjoint, we have
\beq\label{energy-delta-phi-1}
\frac{1}{2}(\Phi^{n+1}, \cL \Phi^{n+1}) -\frac{1}{2}(\Phi^n, \cL \Phi^n) = \Delta t\sum\limits_{i=1}^s b_i(k_i^n,\cL \Phi^{n}) +\frac{\Delta t ^2}{2}\sum\limits_{i,j=1}^s b_i b_j (k_i^n,\cL k_j^n).
\eeq
Applying $\Phi^{n} = \Phi_i^n - \Delta t\sum\limits_{j=1}^s a_{ij} k_j^n$ to the right of \eqref{energy-delta-phi-1}, we deduce
\beq\label{mid1}
\frac{1}{2}(\Phi^{n+1}, \cL \Phi^{n+1}) -\frac{1}{2}(\Phi^n, \cL \Phi^n) = \Delta t\sum\limits_{i=1}^s b_i(k_i^n,\cL \Phi_i^n) - \frac{\Delta t ^2}{2}\sum\limits_{i,j=1}^s \mathbf{M}_{ij} (k_i^n,\cL k_j^n),
\eeq
where $\sum\limits_{i,j=1}^s b_i a_{ij} (k_i^n,\cL k_j^n) = \sum\limits_{i,j=1}^s b_j a_{j i} (k_i^n,\cL k_j^n)$ and $\mathbf{M}_{ij} = b_i a_{i j} + b_j a_{j i} - b_i b_j$ are used.  Note that $\cL$ can be denoted as $\cL = \cA^{*} \cA,$
where $\cA$ is a linear operator and $\cA^{*}$ is the adjoint operator of $\cA$. Since $\mathbf{M}$ is positive semi-definite, we have
\beq\label{mid2}
\sum\limits_{i,j=1}^s \mathbf{M}_{ij} (k_i^n,\cL k_j^n) = \sum\limits_{i,j=1}^s \mathbf{M}_{ij} (\cA k_i^n,\cA k_j^n)\geq 0.
\eeq
Combining eqs. \eqref{mid1} and \eqref{mid2} leads to
\beq\label{energy-delta-phi}
\frac{1}{2}(\Phi^{n+1}, \cL \Phi^{n+1}) -\frac{1}{2}(\Phi^n, \cL \Phi^n) \leq \Delta t\sum\limits_{i=1}^s b_i(k_i^n,\cL \Phi_i^n).
\eeq
Similarly, we have
\beq\label{energy-delta-q}
\|q^{n+1}\|^2 - \|q^n\|^2 \leq 2\Delta t\sum\limits_{i=1}^s b_i (l_i^n, Q_i^n).
\eeq
Adding \eqref{energy-delta-phi} and \eqref{energy-delta-q} and noticing that $l_i^n = \Big(\frac{\partial g}{\partial \Phi}\Big)_i^{n,*}  \cdot k_i^n + \Big(\frac{\partial g}{\partial \nabla\Phi}\Big)_i^{n,*} \cdot \nabla k_i^n$, we obtain
\beq\label{delta-energy}
\cF^{n+1} - \cF^n \leq \Delta t \sum\limits_{i=1}^s b_i \bigg( \cL \Phi_i^n + 2Q_i^n  \Big(\frac{\partial g}{\partial \Phi}\Big)_i^{n,*}  - \nabla\cdot \Big( 2Q_i^n \Big(\frac{\partial g}{\partial \nabla\Phi}\Big)_i^{n,*} \Big), k_i^n \bigg).
\eeq
Substituting $k_i^n = \cG \bigg(\cL \Phi_i^n + 2Q_i^n  \Big(\frac{\partial g}{\partial \Phi}\Big)_i^{n,*}  - \nabla\cdot \Big( 2Q_i^n \Big(\frac{\partial g}{\partial \nabla\Phi}\Big)_i^{n,*} \Big) \bigg)$ in \eqref{delta-energy} and noticing the negative semi-definite property of $\cG$ and $b_i\geq 0, \forall i$, we arrive at
$
\cF^{n+1} - \cF^n \leq 0.
$
This completes the proof.
\end{proof}

\begin{rem}
Note that the Gauss method is a special kind of algebraically stable RK method, whose RK coefficients satisfy $\bM = {\bf 0}$. Thus the Gauss method preserves the discrete energy dissipation law
\beq
\cF^{n+1} - \cF^n = \Delta t \sum\limits_{i=1}^s b_i \bigg( \cL \Phi_i^n + 2Q_i^n  \Big(\frac{\partial g}{\partial \Phi}\Big)_i^{n,*}  - \nabla\cdot \Big( 2Q_i^n \Big(\frac{\partial g}{\partial \nabla\Phi}\Big)_i^{n,*} \Big), k_i^n \bigg) \leq 0.
\eeq
\end{rem}
\begin{rem}
After appropriate spatial discretization that satisfies the discrete integration-by-parts formula (see \cite{Gong&Zhao&WangACM,Gong2018Second} for details), the algebraically stable LEQRK scheme naturally leads to a fully discrete energy stable scheme. In this paper, we employ the Fourier pseudo-spectral method for spatial discretization. We omit the details here due to space limitation. Interested readers are referred to our ealier work \cite{Gong&Zhao&WangACM,Gong&Zhao&WangSISC3} for details.
\end{rem}

Next, we discuss the solvability of the resulting fully discrete scheme.

\begin{thm} \label{thm:Solvability-GRK}
If RK coefficient matrix $\bA = (a_{ij})$ is positive semi-definite and  mobility operator $\cG$ satisfies $\cG = -\cB^*\cB$,  the fully discrete scheme derived by applying the Fourier pseudo-spectral method to \textbf{Scheme \ref{scheme:LEQRK}} is uniquely solvable.
\end{thm}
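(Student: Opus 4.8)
The plan is to reduce the question of unique solvability to a linear-algebra statement about the RK stage equations, which after Fourier pseudo-spectral discretization become a finite-dimensional linear system for the stage vectors $(\Phi_i^n, Q_i^n)_{i=1}^s$ (equivalently $(k_i^n, l_i^n)_{i=1}^s$). Since the whole scheme is linear and the update formulas \eqref{update_Phi} for $(\Phi^{n+1},q^{n+1})$ are explicit once the stages are known, it suffices to show that the stage system \eqref{RK_stage} has a unique solution. Because the system is square and linear in the unknowns, uniqueness is equivalent to the statement that the only solution of the associated homogeneous system (i.e.\ with $\Phi^n=0$, $q^n=0$) is the trivial one; I would carry out the argument in this homogeneous form.

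First I would eliminate $Q_i^n$ and $l_i^n$. Using $l_i^n = (\partial g/\partial\Phi)_i^{n,*}\cdot k_i^n + (\partial g/\partial\nabla\Phi)_i^{n,*}\cdot\nabla k_i^n$ together with $Q_i^n = q^n + \Delta t\sum_j a_{ij} l_j^n$, the remaining unknowns are the stage derivatives $k_i^n$, and the equation $k_i^n = \cG(\cL\Phi_i^n + 2Q_i^n(\cdots) - \nabla\cdot(2Q_i^n(\cdots)))$ with $\Phi_i^n = \Phi^n + \Delta t\sum_j a_{ij} k_j^n$ becomes a closed linear system in $(k_1^n,\dots,k_s^n)$. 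In the homogeneous case, I would test this system: multiply the $i$-th stage equation (with $\cG=-\cB^*\cB$) appropriately and sum against suitable coefficients. Concretely, using that $\cG=-\cB^*\cB$ one writes $k_i^n = -\cB^*\cB\, w_i$ where $w_i := \cL\Phi_i^n + 2Q_i^n(\partial g/\partial\Phi)_i^{n,*} - \nabla\cdot(2Q_i^n(\partial g/\partial\nabla\Phi)_i^{n,*})$, so $(k_i^n, w_i) = -\|\cB w_i\|^2 \le 0$. Taking the inner product $\sum_i b_i (\cdot)$ — or, since we want the argument to work whenever $\bA$ (not the $b_i$-weighted matrix) is positive semi-definite, rather the combination dictated by the structure of $\bA$ — and carefully tracking the contributions of the $\cL$-term and the quadratic $q$-term, the positive semi-definiteness of $\bA$ forces all the relevant quadratic forms to vanish. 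This should give $\cB w_i = 0$ for all $i$, hence $k_i^n = -\cB^*\cB w_i = 0$, and then $l_i^n = 0$, and finally $\Phi_i^n=0$, $Q_i^n=0$, so the homogeneous system has only the trivial solution. Throughout I would use the discrete integration-by-parts property of the Fourier pseudo-spectral discretization (as in the cited earlier work) so that the operators $\cL=\cA^*\cA$, $\cG=-\cB^*\cB$, and the divergence/gradient pairing behave exactly as in the continuous energy estimate of Theorem \ref{thm:Energy-Stability}.

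The main obstacle I anticipate is organizing the test step so that the positive semi-definiteness of $\bA=(a_{ij})$ — rather than the positive semi-definiteness of $\bM$ used in the energy-stability proof — is exactly what is needed. The two hypotheses are different: here $\bA\succeq 0$ is used for solvability (an invertibility statement about $\bI - \Delta t\,\bA\otimes(\cdots)$-type operators), whereas $\bM\succeq 0$ was used for stability. I expect the cleanest route is to show that the linear stage operator has trivial kernel by an energy-type identity in which the cross terms assemble into a quadratic form in $(k_i^n)$ whose matrix is $\Delta t\,(a_{ij})$ tensored with a nonnegative operator; then $\bA\succeq 0$ plus the sign of $(k_i^n,w_i)$ closes the estimate. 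A secondary technical point is handling the $\nabla k_i^n$ terms in $l_i^n$ and the $\nabla\cdot(\cdots)$ terms in $k_i^n$ consistently under the discrete integration-by-parts rule, so that the contribution of the auxiliary variable $q$ reduces to $\|Q_i^n\|^2$-type nonnegative terms exactly as in the proof above. Once those bookkeeping steps are in place, the conclusion that the homogeneous system is trivial — hence the square linear system is uniquely solvable — follows immediately.
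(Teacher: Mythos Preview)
Your proposal is correct and follows essentially the same approach as the paper: reduce to the homogeneous stage system, test the equation $k_i^n=\cG w_i$ against $w_i$, and use $\cG=-\cB^*\cB$ together with $\bA\succeq 0$ to force every term to vanish. One simplification you should note: you do not need to eliminate $Q_i^n,l_i^n$ first, nor to hunt for weights --- simply take the (unweighted) sum $\sum_i (w_i,k_i^n)$; after discrete integration by parts this equals $\sum_i\big[(\cL\Phi_i^n,k_i^n)+2(Q_i^n,l_i^n)\big]$, and substituting the homogeneous stage relations $\Phi_i^n=\Delta t\sum_j a_{ij}k_j^n$, $Q_i^n=\Delta t\sum_j a_{ij}l_j^n$ produces exactly the $\Delta t\sum_{i,j}a_{ij}(\cA k_i^n,\cA k_j^n)+2\Delta t\sum_{i,j}a_{ij}(l_i^n,l_j^n)$ quadratic forms you anticipated, with the $-\sum_i\|\cB w_i\|^2$ term on the other side.
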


\begin{proof}
Without loss of generality, we still use the notations $\cG$, $\cL$ and $\nabla$ to denote the corresponding discrete operators in the fully discrete scheme. We consider the homogeneous linear equation system of \eqref{RK_stage}
\beq \label{homogeneous_RK_stage}
\left\{
\bea{l}
\Phi_i^n =  \Delta t \sum\limits_{j=1}^s a_{ij} k_j^n,\\
Q_i^n = \Delta t \sum\limits_{j=1}^s a_{ij} l_j^n,\\
k_i^n = \cG \bigg(\cL \Phi_i^n + 2Q_i^n  \Big(\frac{\partial g}{\partial \Phi}\Big)_i^{n,*}  - \nabla\cdot \Big( 2Q_i^n \Big(\frac{\partial g}{\partial \nabla\Phi}\Big)_i^{n,*} \Big) \bigg), \\
l_i^n = \Big(\frac{\partial g}{\partial \Phi}\Big)_i^{n,*}  \cdot k_i^n + \Big(\frac{\partial g}{\partial \nabla\Phi}\Big)_i^{n,*} \cdot \nabla k_i^n,
\eea
\right.
\quad i = 1,\cdots,s,
\eeq
where $\Phi_i^n, Q_i^n, k_i^n, l_i^n$ are unknown. To prove unique solvability of the fully discrete scheme, we need to prove that the homogeneous linear equation system \eqref{homogeneous_RK_stage} admits only a zero solution.

Computing the discrete inner product of the third equation in \eqref{homogeneous_RK_stage} with $\cL \Phi_i^n + 2Q_i^n  \Big(\frac{\partial g}{\partial \Phi}\Big)_i^{n,*}  - \nabla\cdot \Big( 2Q_i^n \Big(\frac{\partial g}{\partial \nabla\Phi}\Big)_i^{n,*} \Big)$ and sum over $i$, we deduce from \eqref{homogeneous_RK_stage}
\beql
\Delta t \sum\limits_{i,j=1}^s a_{ij} (\cA k_i^n,\cA k_j^n) + 2\Delta t \sum\limits_{i,j=1}^s a_{ij} (l_i^n,l_j^n) + \sum\limits_{i=1}^s\Bigg\|\cB \bigg(\cL \Phi_i^n + 2Q_i^n  \Big(\frac{\partial g}{\partial \Phi}\Big)_i^{n,*}  - \nabla\cdot \Big( 2Q_i^n \Big(\frac{\partial g}{\partial \nabla\Phi}\Big)_i^{n,*} \Big) \bigg)\Bigg\|^2 = 0,
\eeql
where $\cG = -\cB^*\cB$ and $\cL = \cA^*\cA$ are used. Since $\bA = (a_{ij})$ is positive semi-definite, which implies that the first two terms of the above equation are nonnegative, thus we have
\beq
\cB \bigg(\cL \Phi_i^n + 2Q_i^n  \Big(\frac{\partial g}{\partial \Phi}\Big)_i^{n,*}  - \nabla\cdot \Big( 2Q_i^n \Big(\frac{\partial g}{\partial \nabla\Phi}\Big)_i^{n,*} \Big) \bigg) = 0, \quad \forall i,
\eeq
which leads to
\beq
\cG \bigg(\cL \Phi_i^n + 2Q_i^n  \Big(\frac{\partial g}{\partial \Phi}\Big)_i^{n,*}  - \nabla\cdot \Big( 2Q_i^n \Big(\frac{\partial g}{\partial \nabla\Phi}\Big)_i^{n,*} \Big) \bigg) = 0, \quad \forall i.
\eeq
Therefore, according to \eqref{homogeneous_RK_stage}, we arrive at
\beq
k_i^n = 0, \quad l_i^n = 0, \quad \Phi_i^n = 0, \quad Q_i^n = 0, \quad \forall i.
\eeq
This completes the proof.
\end{proof}

\begin{thm} \label{thm:Solvability-DIRK}
If diagnally implicit RK coefficients satisfy $a_{ii}>0$, then the fully discrete scheme derived by applying the Fourier pseudo-spectral method to \textbf{Scheme \ref{scheme:LEQRK}} is uniquely solvable.
\end{thm}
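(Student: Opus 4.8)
The plan is to exploit the triangular structure of a diagonally implicit RK method, $a_{ij}=0$ for $j>i$, which makes the stage equations in \eqref{RK_stage} decouple into $s$ successive linear systems, one for each stage $i$, and then to run a per-stage version of the energy argument already used in the proof of Theorem~\ref{thm:Solvability-GRK}. After the Fourier pseudo-spectral discretization in space the whole system \eqref{RK_stage}--\eqref{update_Phi} is finite-dimensional and linear in the unknowns (the extrapolated coefficient fields $\big(\frac{\partial g}{\partial\Phi}\big)_i^{n,*}$ and $\big(\frac{\partial g}{\partial\nabla\Phi}\big)_i^{n,*}$ depend only on the known interpolant $\Phi_N$), so unique solvability is equivalent to showing that the homogeneous system \eqref{homogeneous_RK_stage} has only the trivial solution. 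I would prove the latter by induction on $i$: assuming $k_j^n=l_j^n=\Phi_j^n=Q_j^n=0$ for all $j<i$, the DIRK structure reduces the $i$-th homogeneous stage to
\[
\Phi_i^n = \Delta t\, a_{ii}\, k_i^n,\qquad Q_i^n = \Delta t\, a_{ii}\, l_i^n,
\]
together with the defining relations for $k_i^n$ and $l_i^n$ in \eqref{homogeneous_RK_stage}.

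The core step is to test the $k_i^n$-equation against $w_i^n:=\cL \Phi_i^n + 2Q_i^n\big(\frac{\partial g}{\partial\Phi}\big)_i^{n,*} - \nabla\cdot\big(2Q_i^n\big(\frac{\partial g}{\partial\nabla\Phi}\big)_i^{n,*}\big)$. On one hand, $k_i^n=\cG w_i^n$ and the negative semi-definiteness of $\cG$ give $(w_i^n,k_i^n)=(w_i^n,\cG w_i^n)\le 0$. On the other hand, using the discrete summation-by-parts identity (valid for the Fourier pseudo-spectral operators, cf. the remark after Theorem~\ref{thm:Energy-Stability}) together with $l_i^n=\big(\frac{\partial g}{\partial\Phi}\big)_i^{n,*}\cdot k_i^n + \big(\frac{\partial g}{\partial\nabla\Phi}\big)_i^{n,*}\cdot\nabla k_i^n$, one gets $(w_i^n,k_i^n)=(\cL \Phi_i^n,k_i^n)+2(Q_i^n,l_i^n)$; substituting $k_i^n=\Phi_i^n/(\Delta t\, a_{ii})$ and $l_i^n=Q_i^n/(\Delta t\, a_{ii})$ from the reduced stage equations yields
\[
(w_i^n,k_i^n)=\frac{1}{\Delta t\, a_{ii}}\Big[(\cL \Phi_i^n,\Phi_i^n)+2\|Q_i^n\|^2\Big]\ge 0,
\]
since $\cL$ is positive semi-definite and $a_{ii}>0$. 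Hence $(w_i^n,k_i^n)=0$, which forces $(\cL \Phi_i^n,\Phi_i^n)=0$ and $Q_i^n=0$; writing $\cL=\cA^*\cA$ then gives $\cL \Phi_i^n=0$, so $w_i^n=0$, whence $k_i^n=\cG w_i^n=0$, $\Phi_i^n=\Delta t\, a_{ii} k_i^n=0$ and $l_i^n=0$. This closes the induction, and then $\Phi^{n+1}=\Delta t\sum_i b_i k_i^n=0$ and $q^{n+1}=\Delta t\sum_i b_i l_i^n=0$, establishing unique solvability.

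I expect the argument to be essentially routine once the DIRK decoupling is in place, since it is a stagewise reprise of Theorem~\ref{thm:Solvability-GRK}; the only points needing care are (i) verifying that each homogeneous stage is genuinely a linear system in its four unknowns — so that solvability reduces to triviality of the homogeneous solution — and (ii) that the spatial discretization preserves the summation-by-parts identity and the self-adjointness/semidefiniteness of $\cL$ used above, exactly the structural properties already invoked for Theorem~\ref{thm:Energy-Stability}. Unlike Theorem~\ref{thm:Solvability-GRK}, no factorization $\cG=-\cB^*\cB$ is needed here: the substitution $\Phi_i^n=\Delta t\, a_{ii} k_i^n$ lets one conclude $w_i^n=0$ first, after which $k_i^n=\cG w_i^n=0$ follows from negative semi-definiteness of $\cG$ alone.
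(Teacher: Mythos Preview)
Your proposal is correct and follows essentially the same route as the paper: exploit the DIRK triangular structure to decouple the homogeneous system into per-stage problems $\Phi_i^n=\Delta t\,a_{ii}k_i^n$, $Q_i^n=\Delta t\,a_{ii}l_i^n$, then test $k_i^n=\cG w_i^n$ against $w_i^n$ to sandwich $(w_i^n,k_i^n)$ between a nonpositive and a nonnegative quantity (the paper substitutes $\Phi_i^n=\Delta t\,a_{ii}k_i^n$ to obtain $\Delta t\,a_{ii}\|\cA k_i^n\|^2+2\Delta t\,a_{ii}\|l_i^n\|^2$, whereas you substitute the inverse relation to obtain $\frac{1}{\Delta t\,a_{ii}}[(\cL\Phi_i^n,\Phi_i^n)+2\|Q_i^n\|^2]$, which is trivially equivalent). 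Your observation that the factorization $\cG=-\cB^*\cB$ is unnecessary here is also consistent with the paper's proof, which only uses $(w,\cG w)\le 0$; one tiny quibble is that once $w_i^n=0$, the conclusion $k_i^n=\cG w_i^n=0$ follows from linearity of $\cG$, not from its semidefiniteness.
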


\begin{proof}
For the diagnally implicit RK (DIRK) scheme, we solve $\Phi_i^n, Q_i^n, k_i^n, l_i^n$ in turn from $i=1$ to $s$. Therefore, we here consider the following homogeneous linear equation system
\beq \label{homogeneous_RK_stage-DIRK}
\left\{
\bea{l}
\Phi_i^n =  \Delta t a_{ii} k_i^n,\\
Q_i^n = \Delta t a_{ii} l_i^n,\\
k_i^n = \cG \bigg(\cL \Phi_i^n + 2Q_i^n  \Big(\frac{\partial g}{\partial \Phi}\Big)_i^{n,*}  - \nabla\cdot \Big( 2Q_i^n \Big(\frac{\partial g}{\partial \nabla\Phi}\Big)_i^{n,*} \Big) \bigg), \\
l_i^n = \Big(\frac{\partial g}{\partial \Phi}\Big)_i^{n,*}  \cdot k_i^n + \Big(\frac{\partial g}{\partial \nabla\Phi}\Big)_i^{n,*} \cdot \nabla k_i^n,
\eea
\right.
\eeq
where $\Phi_i^n, Q_i^n, k_i^n, l_i^n$ are unknown. To prove unique solvability of the fully discrete scheme, we need to prove that homogeneous linear equation system \eqref{homogeneous_RK_stage-DIRK} admits only a zero solution.

Similar to the proof of \textbf{Theorem \ref{thm:Solvability-GRK}}, we have
\benl
&&\Delta t a_{ii} \|\cA k_i^n\|^2 + 2\Delta t a_{ii} \|l_i^n\|^2\\
&=& \Bigg(\cL \Phi_i^n + 2Q_i^n  \Big(\frac{\partial g}{\partial \Phi}\Big)_i^{n,*}  - \nabla\cdot \Big( 2Q_i^n \Big(\frac{\partial g}{\partial \nabla\Phi}\Big)_i^{n,*} \Big),\cG \bigg(\cL \Phi_i^n + 2Q_i^n  \Big(\frac{\partial g}{\partial \Phi}\Big)_i^{n,*}  - \nabla\cdot \Big( 2Q_i^n \Big(\frac{\partial g}{\partial \nabla\Phi}\Big)_i^{n,*} \Big) \bigg)\Bigg)\\
&\leq& 0,
\eenl
which leads to
\beq\label{eq:mid}
\cA k_i^n = 0, \quad l_i^n = 0.
\eeq
Combining \eqref{homogeneous_RK_stage-DIRK} and \eqref{eq:mid}, we deduce in turn
\beq
l_i^n = 0, \quad Q_i^n = 0, \quad \cL k_i^n = 0, \quad \cL \Phi_i^n = 0, \quad k_i^n = 0, \quad \Phi_i^n = 0.
\eeq
This completes the proof.
\end{proof}

\begin{rem}
In this paper, we give examples in two 4th-order algebraically stable RK methods, i.e., Gauss4th and DIRK4th given by the Butcher tables, respectively, 
\[
\begin{array}
{c|cc}
\frac{1}{2}-\frac{\sqrt{3}}{6} & \frac{1}{4} & \frac{1}{4}-\frac{\sqrt{3}}{6}  \\
\frac{1}{2}+\frac{\sqrt{3}}{6} & \frac{1}{4}+\frac{\sqrt{3}}{6} & \frac{1}{4} \\
\hline
& \frac{1}{2} &  \frac{1}{2} \\
\end{array},
\qquad\qquad
\begin{array}
{c|ccc}
\sigma &\sigma & 0 & 0  \\
\frac{1}{2} & \frac{1}{2}-\sigma & \sigma & 0 \\
1-\sigma & 2\sigma & 1-4\sigma & \sigma \\
\hline
& \mu &  1-2\mu & \mu \\
\end{array},
\]
with $\sigma = \cos(\pi/18)/\sqrt{3}+1/2$, $\mu = 1/\big(6(2\sigma-1)^2\big)$.
We note that Gauss4th and DIRK4th satisfy the conditions in \textbf{Theorem \ref{thm:Solvability-GRK}} and \textbf{Theorem \ref{thm:Solvability-DIRK}}, respectively. Therefore, after an appropriate spatial discretization, the LEQRK schemes equipped with Gauss4th or DIRK4th are uniquely solvable.
\end{rem}

\begin{rem}
Noticing that $\Phi_i^m$ approximates $\Phi(t_m+c_i \Delta t)$, we can choose the time nodes $t_m$, $t_{m}+c_i \Delta t$ ($m<n$) and $t_n$ as the interpolation points  to obtain the interpolation polynomial $\Phi_N(t)$. However, too many interpolation points will cause the interpolation polynomial to be highly oscillating, which may make $\Phi_N(t_n+c_i \Delta t)$ an inaccurate extrapolation for $\Phi(t_n+c_i\Delta t)$. Therefore, we only take $t_{n-1}$, $t_{n-1}+c_i \Delta t, ~\forall i$ and $t_n$ as  the interpolation points in this paper. For example, for the Gauss4th method, we choose the interpolation points $(t_{n-1}, \Phi^{n-1})$, $(t_{n-1} + c_1 \Delta t, \Phi_1^{n-1})$, $(t_{n-1} + c_2 \Delta t, \Phi_2^{n-1})$, $(t_{n}, \Phi^{n})$ and obtain the corresponding interpolation polynomial
\benl
\Phi_N(t_{n-1}+s \Delta t) &=& \frac{(s-c_1)(s-c_2)(s-1)}{-c_1 c_2}\Phi^{n-1} + \frac{s(s-c_2)(s-1)}{c_1(c_1-c_2)(c_1-1)}\Phi_1^{n-1}  \\
&&+ \frac{s(s-c_1)(s-1)}{c_2 (c_2-c_1)(c_2-1)}\Phi_2^{n-1}  + \frac{s(s-c_1)(s-c_2)}{(1-c_1)(1-c_2)}\Phi^{n},
\eenl
where $c_1 = 1/2-\sqrt{3}/6$ and $c_2 = 1/2+\sqrt{3}/6$. Thus we have
\begin{align}
&\Phi_N(t_{n}+c_1 \Delta t) = (2\sqrt{3}-4)\Phi^{n-1} + (7\sqrt{3}-11)\Phi_1^{n-1}  + (6-5\sqrt{3})\Phi_2^{n-1}  + (10-4\sqrt{3})\Phi^{n},\label{Gauss_RK_s1}\\
&\Phi_N(t_{n}+c_2 \Delta t) = -(2\sqrt{3}+4)\Phi^{n-1}  +(6+5\sqrt{3})\Phi_1^{n-1}  - (7\sqrt{3}+11)\Phi_2^{n-1}  + (10+4\sqrt{3})\Phi^{n}.\label{Gauss_RK_s2}
\end{align}
Replacing $n$ with $n-1$ in the first equation of \eqref{RK_stage} and \eqref{update_Phi}, then we deduce
\beq\label{Phi_n}
\Phi^n = \Phi^{n-1}-\sqrt{3}\Phi_1^{n-1} + \sqrt{3}\Phi_2^{n-1}.
\eeq
According to \eqref{Gauss_RK_s1}-\eqref{Phi_n}, we obtain
\ben
&&\Phi_N(t_{n}+c_1 \Delta t) = (6-2\sqrt{3})\Phi^{n-1} + (1-3\sqrt{3})\Phi_1^{n-1}  + (5\sqrt{3}-6)\Phi_2^{n-1},\label{Gauss_s1}\\
&&\Phi_N(t_{n}+c_2 \Delta t) = (6+2\sqrt{3})\Phi^{n-1}  - (5\sqrt{3}+6)\Phi_1^{n-1}  + (1+3\sqrt{3})\Phi_2^{n-1}.\label{Gauss_s2}
\een
Note that if we take $t_{n-1}$, $t_{n-1}+c_i \Delta t ~(i=1,2)$ as  the interpolation points,  we can also derive \eqref{Gauss_s1}-\eqref{Gauss_s2}, which implies that the LEQRK scheme induced by the Gauss4th method and the interpolations \eqref{Gauss_RK_s1}-\eqref{Gauss_RK_s2} or \eqref{Gauss_s1}-\eqref{Gauss_s2}  may achieve third order accuracy.
\end{rem}

\subsection{LEQRK-PC schemes}
To improve the accuracy as well as stability of \textbf{Scheme \ref{scheme:LEQRK}}, we  propose a prediction-correction scheme motivated by the works in \cite{ConvexSplittingPredictor,CiCP-24-635,Gong&Zhao&WangSISC3}. Employing the prediction-correction strategy to \textbf{Scheme \ref{scheme:LEQRK}}, we obtain the following prediction-correction method:

\begin{scheme}[$s$-stage LEQRK-PC Scheme] \label{scheme:LEQRK-PC}
Let $b_i$, $a_{ij}$ ($i,j = 1,\cdots,s$) be real numbers and let $c_i = \sum\limits_{j=1}^s a_{ij}$.
For given $(\Phi^n, q^n)$ and $\Phi_N(t_n+c_i \Delta t), Q_N(t_n+c_i \Delta t),  \forall i$, the following intermediate values are first calculated by the following prediction-correction strategy
\begin{enumerate}
\item  Prediction: we set $\Phi_i^{n,0} = \Phi_N(t_n+c_i \Delta t),$ $Q_i^{n,0} = Q_N(t_n+c_i \Delta t)$. Let $M>0$ be a given integer. For $m=0$ to $M-1$, we compute $\Phi_i^{n,m+1}$, $k_i^{n,m+1}$, $l_i^{n,m+1}$, $Q_i^{n,m+1}$ using
\beq \label{pre_RK_stage}
\left\{
\bea{l}
\Phi_i^{n,m+1} =  \Phi^n +  \Delta t \sum\limits_{j=1}^s a_{ij} k_j^{n,m+1},\\
k_i^{n,m+1} = \cG \bigg(\cL \Phi_i^{n,m+1} + 2Q_i^{n,m}  \frac{\partial g}{\partial \Phi}[\Phi_i^{n,m}]  - \nabla\cdot \Big( 2Q_i^{n,m} \frac{\partial g}{\partial \nabla\Phi}[\Phi_i^{n,m}] \Big) \bigg),\\
l_i^{n,m+1} = \frac{\partial g}{\partial \Phi}[\Phi_i^{n,m+1}]  \cdot k_i^{n,m+1} + \frac{\partial g}{\partial \nabla\Phi}[\Phi_i^{n,m+1}] \cdot \nabla k_i^{n,m+1},\\
Q_i^{n,m+1} = q^n +  \Delta t \sum\limits_{j=1}^s a_{ij} l_j^{n,m+1},
\eea
\right.
\quad i = 1,\cdots,s.
\eeq
If  $\max\limits_{i}\|\Phi^{n,m+1}_{i}-\Phi^{n,m}_{i}\|_\infty < TOL$, we stop the iteration and  set $\Phi^{n,*}_{i} = \Phi_i^{n,m+1}$; otherwise, we set $\Phi^{n,*}_{i} = \Phi_i^{n,M}$.

\item  Correction: for the predicted $\Phi^{n,*}_{i}$, we compute the intermediate values $\Phi_i^n, Q_i^n, k_i^n, l_i^n$ via
\beq \label{Cor_RK_stage}
\left\{
\bea{l}
\Phi_i^n =  \Phi^n +  \Delta t \sum\limits_{j=1}^s a_{ij} k_j^n,\\
Q_i^n = q^n +  \Delta t \sum\limits_{j=1}^s a_{ij} l_j^n,\\
k_i^n = \cG \bigg(\cL \Phi_i^n + 2Q_i^n  \frac{\partial g}{\partial \Phi}[\Phi_i^{n,*}]  - \nabla\cdot \Big( 2Q_i^n \frac{\partial g}{\partial \nabla\Phi}[\Phi_i^{n,*}] \Big) \bigg), \\
l_i^n = \frac{\partial g}{\partial \Phi}[\Phi_i^{n,*}]  \cdot k_i^n + \frac{\partial g}{\partial \nabla\Phi}[\Phi_i^{n,*}] \cdot \nabla k_i^n,
\eea
\right.
\quad i = 1,\cdots,s.
\eeq
\end{enumerate}
Then $(\Phi^{n+1}, q^{n+1})$ is updated via
\ben
&& \Phi^{n+1} = \Phi^n +  \Delta t \sum\limits_{i=1}^s b_i k_i^n,\\
&& q^{n+1} = q^n +  \Delta t \sum\limits_{i=1}^s b_i l_i^n.
\een
\end{scheme}

\begin{rem}
Note that $Q_N(t)$ of \textbf{Scheme \ref{scheme:LEQRK-PC}} denotes the interpolation polynomial of $q$. If we take $\Phi_N(t) = \Phi^n, ~Q_N(t) = q^n,$ then \textbf{Scheme \ref{scheme:LEQRK}} reduces to first order while \textbf{Scheme \ref{scheme:LEQRK-PC}} with appropriate predictions can achieve the desired high order. In numerical computations, we will apply \textbf{Scheme \ref{scheme:LEQRK-PC}} to figure out the necessary initial information. In addition, linear system \eqref{pre_RK_stage} is constant coefficient and thus can be readily solved by using the fast Fourier transform (FFT).
\end{rem}

\begin{rem}
If we choose $M = 0,$ then \textbf{Scheme \ref{scheme:LEQRK-PC}} reduces to \textbf{Scheme \ref{scheme:LEQRK}}. If $M$ is large enough, the LEQRK-PC scheme approximates the IEQ-RK scheme proposed in \cite{GongEnergy}. There is no theoretical result on the choice of iteration step $M$. From our numerical experience, several iteration steps $M \leq 5$ would improve the accuracy noticeably.
\end{rem}

\begin{rem}
Similar to \textbf{Scheme \ref{scheme:LEQRK}}, we can also establish energy stability and solvability for the LEQRK-PC scheme, which is omitted here to save space.
\end{rem}

\section{ Numerical Results}\label{sect:Appl}

In the previous sections, we present some high-order linear energy stable schemes for general gradient flow models. In this section, we apply the proposed schemes to two benchmark gradient flow models: the Cahn-Hilliard model for binary fluids and the molecular beam epitaxial (MBE) growth model. For convenience, the LEQRK schemes equipped with Gauss4th and DIRK4th are abbreviated respectively as LEQGRK and LEQDIRK, while their corresponding LEQRK-PC schemes with the prediction iteration $M$ are denoted by LEQGRK-PC-$M$ and LEQDIRK-PC-$M$.  

\subsection{Cahn-Hilliard model}

We consider the Cahn-Hilliard model for immiscible binary fluids given as follows
\beq\label{CHeq}
\phi_t = \lambda \Delta(-\varepsilon^2\Delta \phi + \phi^3-\phi),
\eeq
with the double-well bulk energy
\beq\label{CH-energy}
F = \frac{\varepsilon^2}{2}\|\nabla \phi\|^2 + \frac{1}{4}\|\phi^2-1\|^2,
\eeq
where $\lambda$ is the mobility parameter and $\varepsilon$ controls the interfacial thickness. If we introduce the auxiliary variable $q = \frac{1}{2}(\phi^2-1-\gamma)$, where $\gamma\geq 0$ is a constant, the energy functional \eqref{CH-energy} is rewritten into
\beq
\cF = \frac{1}{2} \Big(\phi, -\varepsilon^2\Delta \phi + \gamma\phi \Big) + \|q\|^2 - \frac{\gamma^2+2\gamma}{4}|\Omega|.
\eeq
Then the Cahn-Hilliard equation \eqref{CHeq} is equivalently transform into the following system
\beq \label{CHeq-EQ}
\left\{
\bea{l}
\phi_t = \lambda \Delta(-\varepsilon^2\Delta \phi + \gamma\phi + 2q \phi), \\
q_t = \phi \phi_t,
\eea
\right.
\eeq
which satisfies the following  energy dissipation law
\beq
\frac{d \cF}{d t} = -\lambda\big\|\nabla(-\varepsilon^2\Delta \phi + \gamma\phi + 2q \phi)\big\|^2\leq 0.
\eeq

Applying the LEQRK-PC scheme to system \eqref{CHeq-EQ}, we obtain

\begin{scheme} \label{scheme:LEQRK-PC-CH}
Let $b_i$, $a_{ij}$ ($i,j = 1,\cdots,s$) be real numbers and $c_i = \sum\limits_{j=1}^s a_{ij}$. For given $(\phi^n, q^n)$ and $\Phi_N(t_n+c_i \Delta t), Q_N(t_n+c_i \Delta t),  \forall i$, the following intermediate values are first calculated by the following prediction-correction strategy.
\begin{enumerate}
\item  Prediction: we set $\Phi_i^{n,0} = \Phi_N(t_n+c_i \Delta t),$ $Q_i^{n,0} = Q_N(t_n+c_i \Delta t)$ and $M>0$ as a given positive integer. For $m=0$ to $M-1$, we compute $\Phi_i^{n,m+1}$, $k_i^{n,m+1}$, $l_i^{n,m+1}$, $Q_i^{n,m+1}$ using
\beq \label{pre_RK_stage-CH}
\left\{
\bea{l}
\Phi_i^{n,m+1} =  \phi^n +  \Delta t \sum\limits_{j=1}^s a_{ij} k_j^{n,m+1},\\
k_i^{n,m+1} =  \lambda \Delta\big(-\varepsilon^2\Delta \Phi_i^{n,m+1} + \gamma\Phi_i^{n,m+1}  + 2Q_i^{n,m} \Phi_i^{n,m} \big),\\
l_i^{n,m+1} = \Phi_i^{n,m+1} k_i^{n,m+1},\\
Q_i^{n,m+1} = q^n +  \Delta t \sum\limits_{j=1}^s a_{ij} l_j^{n,m+1},
\eea
\right.
\quad i = 1,\cdots,s.
\eeq
Given an error tolerance $TOL>0$, if  $\max\limits_{i}\|\Phi^{n,m+1}_{i}-\Phi^{n,m}_{i}\|_\infty < TOL$, we stop the iteration and  set $\Phi^{n,*}_{i} = \Phi_i^{n,m+1}$; otherwise, we set $\Phi^{n,*}_{i} = \Phi_i^{n,M}$.

\item  Correction: for the predicted $\Phi^{n,*}_{i}$, we compute the intermediate values $\Phi_i^n, Q_i^n, k_i^n, l_i^n$ via
\beq \label{Cor_RK_stage-CH}
\left\{
\bea{l}
\Phi_i^n =  \phi^n +  \Delta t \sum\limits_{j=1}^s a_{ij} k_j^n,\\
Q_i^n = q^n +  \Delta t \sum\limits_{j=1}^s a_{ij} l_j^n,\\
k_i^n = \lambda \Delta\big(-\varepsilon^2\Delta \Phi_i^{n} + \gamma\Phi_i^{n}  + 2Q_i^{n} \Phi_i^{n,*} \big), \\
l_i^n = \Phi_i^{n,*} k_i^{n},
\eea
\right.
\quad i = 1,\cdots,s.
\eeq
\end{enumerate}
Then $(\phi^{n+1}, q^{n+1})$ is updated via
\ben
&& \phi^{n+1} = \phi^n +  \Delta t \sum\limits_{i=1}^s b_i k_i^n,\\
&& q^{n+1} = q^n +  \Delta t \sum\limits_{i=1}^s b_i l_i^n.
\een
\end{scheme}

First of all, we present the time mesh refinement tests to show the order of accuracy of the proposed schemes. We consider the domain as $[0 \,\,\, 2\pi]^2$ and choose  model parameter values $\lambda = 0.01$, $\varepsilon = 1$ and $\gamma =1$. Note that the analytical solution for the Cahn-Hilliard equation is usually unknown. To better calculate the errors in time mesh refinement tests, we create an exact solution $\phi (x,y,t) = \sin(x)\sin(y) \cos(t)$, by adding a corresponding forcing term on the right-hand side of the Cahn-Hilliard equation. Then, we solve it in a 2D spatial domain with periodic boundary conditions. The equation is discretized spatially using the Fourier pseudo-spectral method with $128^2$ spatial meshes.

The numerical solution of $\phi$ at $t=1$ is calculated using a set of different numerical schemes with various time steps. Both the $L^2$ and $L^\infty$ errors in the solution are calculated, and the results are summarized in Figure \ref{fig:CH-error}. We observe that, due to the low-order extrapolation, LEQDIRK only reaches 2nd order accuracy, but it can reach its 4th order accuracy with only two prediction iterations. Similarly, due to the low-order extrapolation,  LEQGRK only has 3rd order accuracy, and it can easily reach its 4th order accuracy with one prediction iteration. From Figure \ref{fig:CH-error}, we also see that the LEQDIRK-PC scheme with only three prediction iterations can reach similar accuracy as IEQDIRK proposed in \cite{GongEnergy}, while the LEQGRK-PC scheme only requires two prediction iterations. 

\begin{figure}
\center
\subfigure[$L^2$ and $L^\infty$ errors using the DIRK4th scheme]{
\includegraphics[width=0.4\textwidth]{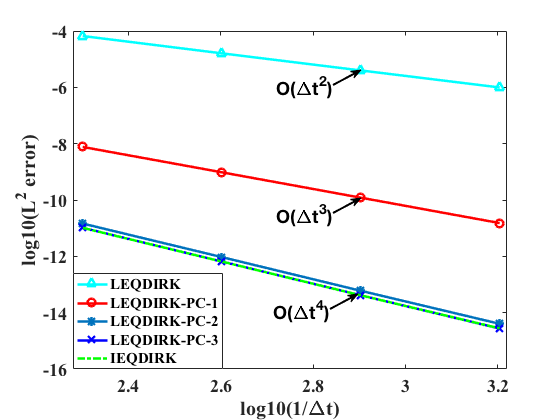}
\includegraphics[width=0.4\textwidth]{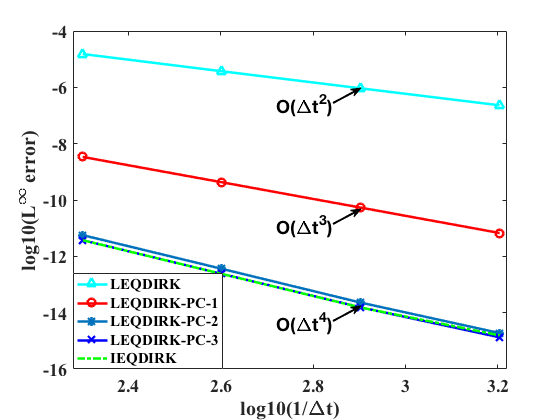}
}

\subfigure[$L^2$ and $L^\infty$ errors using the Gauss4th  scheme]{
\includegraphics[width=0.4\textwidth]{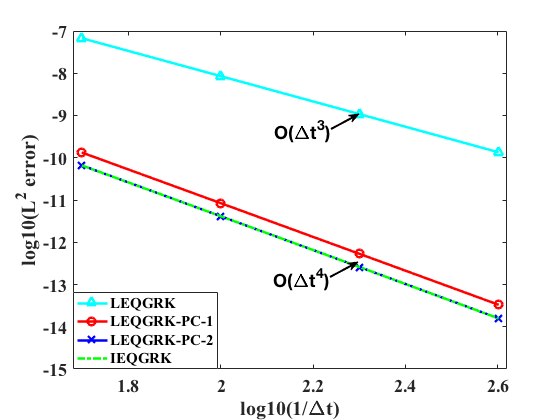}
\includegraphics[width=0.4\textwidth]{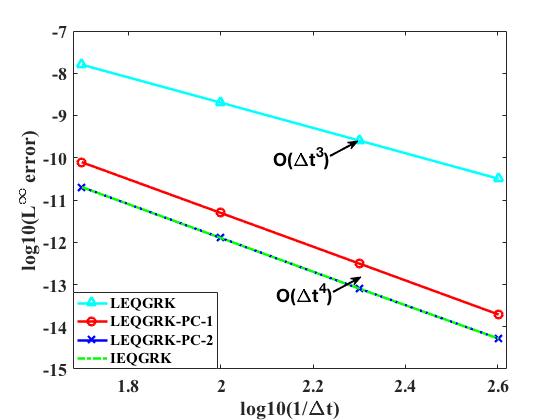}
}
\caption{Time step refinement tests with the proposed numerical schemes for the Cahn-Hilliard equation.}
\label{fig:CH-error}
\end{figure}

To further compare the DIRK4th and Gauss4th schemes, we summarize their $L^2$ and $L^\infty$ errors in the same plot, as shown in Figure \ref{fig:CH-error-2}(a)-(b), respectively. We observe that the Gauss4th scheme reaches its order of accuracy even with a larger time step size. After a few iterations, the DIRK4th scheme also reaches its order of accuracy quickly. Also, with the same time step size, the Gauss4th scheme is more accurate than the DIRK4th scheme.

\begin{figure}
\center

\subfigure[$L^2$ error]{\includegraphics[width=0.4\textwidth]{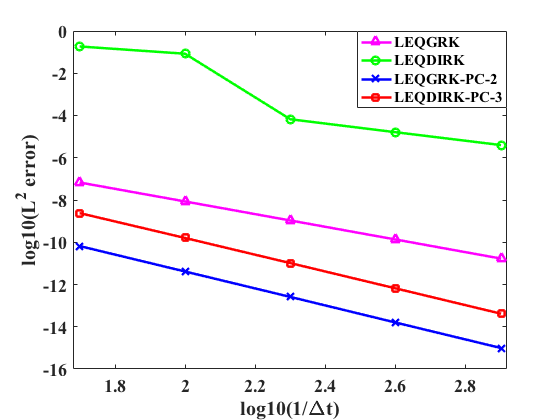}}
\subfigure[$L^\infty$ error]{\includegraphics[width=0.4\textwidth]{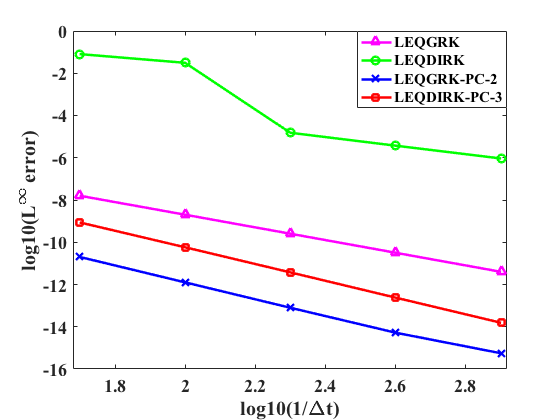}}
\caption{$L^2$ and $L^\infty$ errors using DIRK4th and Gauss4th schemes for the Cahn-Hilliard equation.}
\label{fig:CH-error-2}
\end{figure}

To further benchmark these two schemes,  we conduct several numerical tests.
For comparison, we also implement the widely used 2nd order convex splitting scheme (which we refer to as the 2nd-CS scheme in this paper),
\beq
\frac{\phi^{n+1}- \phi^n}{\Delta t} = \lambda \Delta \Big[-\varepsilon^2\Delta \phi^{n+\frac{1}{2}} +\frac{1}{2} ((\phi^n)^2+(\phi^{n+1})^2)\phi^{n+\frac{1}{2}}-(\frac{3}{2}\phi^{n}-\frac{1}{2}\phi^{n-1})\Big].
\eeq
We emphasis that there is no theoretical proofs for energy dissipation for the 2nd-CS scheme above, though it is more accurate than the first-order convex splitting scheme.

For the first example, we choose the domain as $[0 \,\,\, 1]^2$, and parameters $\lambda=1$, $\epsilon=0.01$, and $\gamma=1$. Then, we use the initial condition \cite{Wang&Wang&WiseDCDS2010}
\beq
\phi(x,y,t=0) = 0.05  \Big( \cos(3x)\cos(4y)+(\cos(4x)\cos(3y))^2 + \cos(x-5y)\cos(2x-y) \Big).
\eeq
This initial profile would drive a fast coarsening dynamics, such that the algorithm would predict 'wrong' dynamics if it is not accurate or robust enough. In this example, we intend to find the maximum possible time step that one can capture the correct dynamics numerically. Various numerical schemes with different time steps are implemented and compared. The numerical results are summarized in Figure \ref{fig:CH_stable}, where the predicted profile of $\phi(x,y)$ at $t=0.1$ are shown using different schemes and time-step sizes. We observe that the maximum possible time step size for the 2nd-order convex splitting scheme is approximately $\Delta t = 6.25 \times 10^{-5}$. For the DIRK4th scheme with 5 prediction iterations, the maximum time step size is approximately $\Delta t = 1.25 \times 10^{-4}$.  For the Gauss4th method with five prediction iterations, it is $2.5 \times 10^{-4}$.  Notice, the prediction steps could be easily solved with FFT, so the computational cost is negligible compared to the correction step.

These results indicate that the DIRK4th and Gauss4th schemes are superior over the 2nd order convex splitting scheme in this simulation. In addition, one should notice that there is no theoretical guarantee of monotonic energy decay with the 2nd order convex splitting scheme, and the implementation of the convex splitting scheme is relatively complicated, as nonlinear equations have to be solved at each time step. In contrast, the proposed high-order schemes here are linear and easy to implement. Also, they are rather general so that they can be applied to a broad class of gradient flow models.

\begin{figure}

\center
\subfigure[The profile of $\phi$ at $t=0.1$ using various time step sizes: $\Delta t= 2.5 \times 10^{-4}, 1.25 \times 10^{-4}, 3.125 \times 10^{-5}$ with the 2nd-order convex splitting scheme.]{
\includegraphics[width=0.22\textwidth]{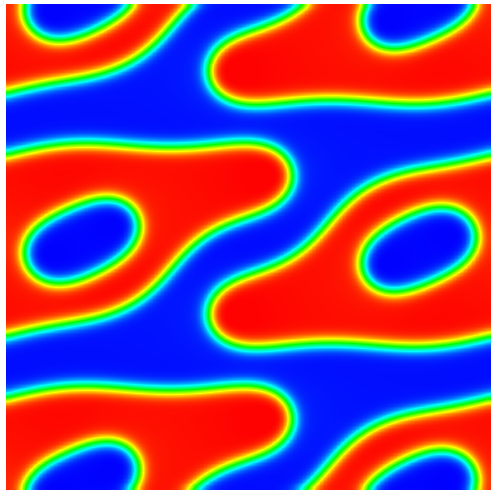}
\includegraphics[width=0.22\textwidth]{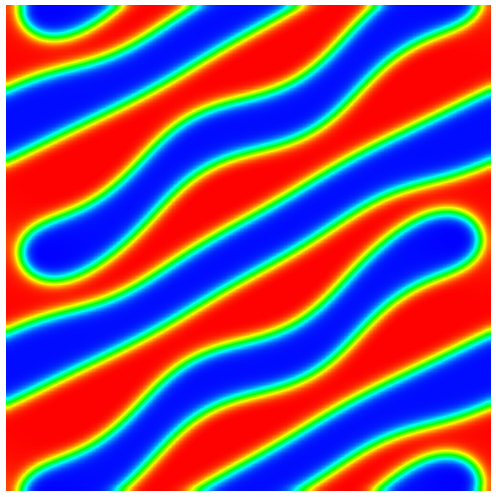}
\includegraphics[width=0.22\textwidth]{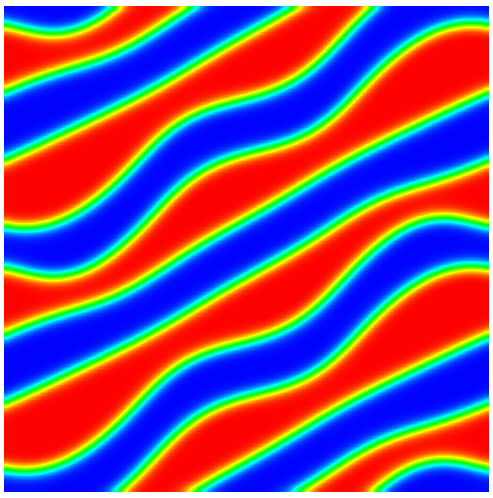}
}

\subfigure[The profile of $\phi$ at $t=0.1$ using various time step sizes: $\Delta t= 5 \times 10^{-4}, 2.5 \times 10^{-4}, 1.25 \times 10^{-4}$ with the  DIRK4th scheme.]{
\includegraphics[width=0.22\textwidth]{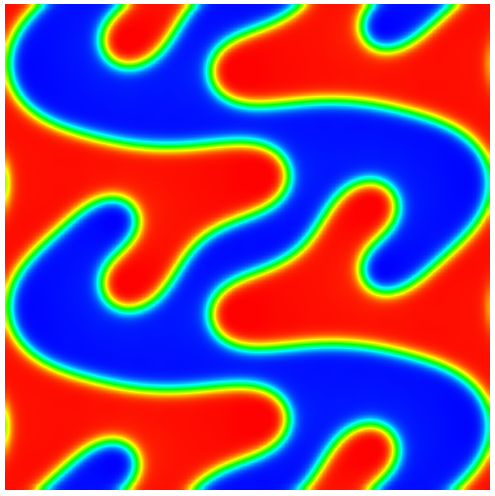}
\includegraphics[width=0.22\textwidth]{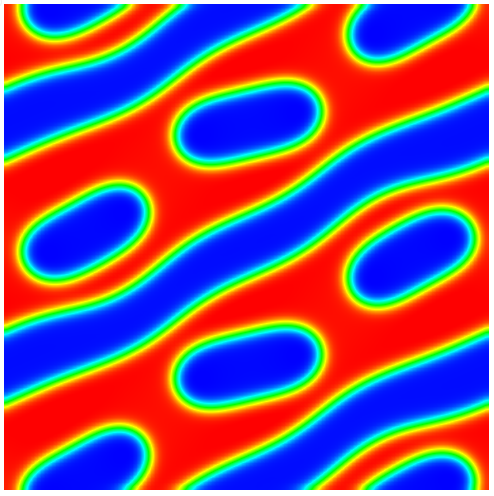}
\includegraphics[width=0.22\textwidth]{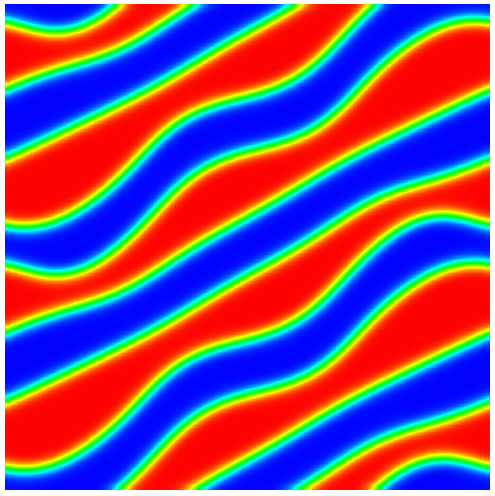}
}


\subfigure[The profile of $\phi$ at $t=0.1$ using various time step sizes: $\Delta t= 5 \times 10^{-4}, 2.5 \times 10^{-4}, 1.25 \times 10^{-4}$ with the Gauss4th scheme.]{
\includegraphics[width=0.22\textwidth]{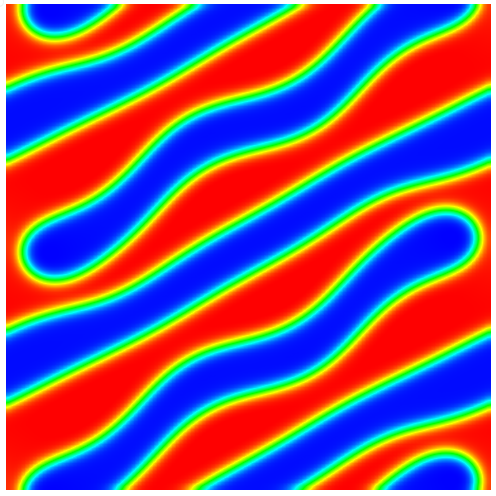}
\includegraphics[width=0.22\textwidth]{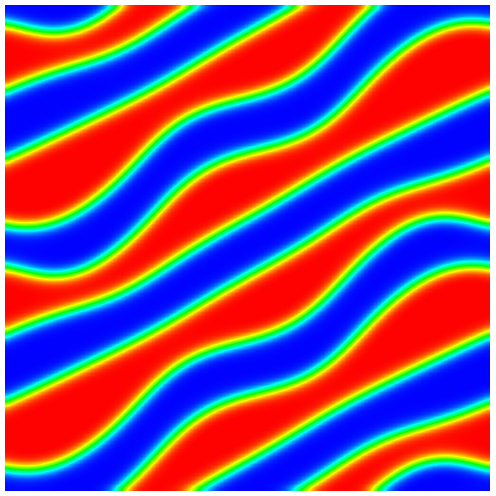}
\includegraphics[width=0.22\textwidth]{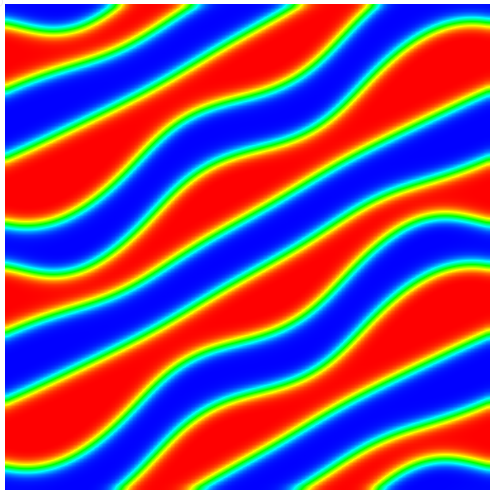}

}

\caption{A comparison of the three schemes on predicting accurate Cahn-Hilliard dynamics at various time step sizes. The figures show the numerical results of $\phi$ at time $t=0.1$ using different schemes with various time steps. The last sub-figure in each row indicates the maximum possible time step size to predict correct dynamics for the corresponding scheme. Both the DIRK4th scheme and the Gauss4th scheme perform better than the 2nd order CS scheme.}
\label{fig:CH_stable}
\end{figure}



To further confirm these findings,  we conduct an additional numerical experiment with random initial conditions. Specifically, we use
\beq
\phi(x,y,t=0) = 0.001 \textrm{rand}(x,y),
\eeq
where $\textrm{rand}(x,y) $ generates random numbers between $-1$ and $1$ uniformly. The rest settings are kept the same as in the previous example. The numerical results are summarized in Figure \ref{fig:CH_random}. This numerical example also indicates that the new schemes allow larger step sizes for accurately predicting the coarsening dynamics over the 2nd CS scheme.

\begin{figure}

\center
\subfigure[The profile of $\phi$ at $t=0.1$ using various time step sizes: $\Delta t= 2.5 \times 10^{-4}, 1.25 \times 10^{-4}, 6.25 \times 10^{-5}$ with the 2nd order convex splitting method]{
\includegraphics[width=0.22\textwidth]{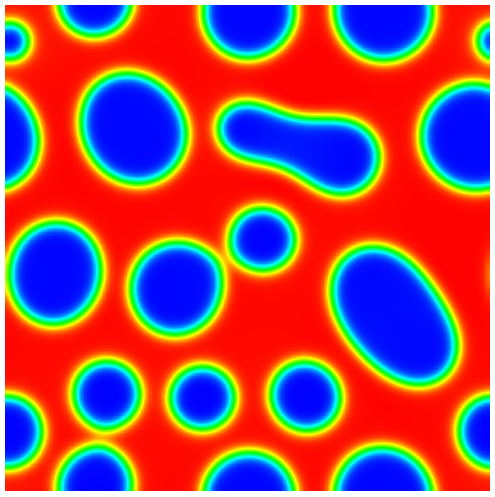}
\includegraphics[width=0.22\textwidth]{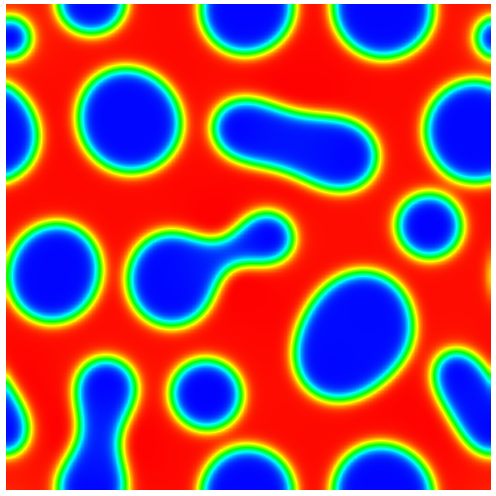}
\includegraphics[width=0.22\textwidth]{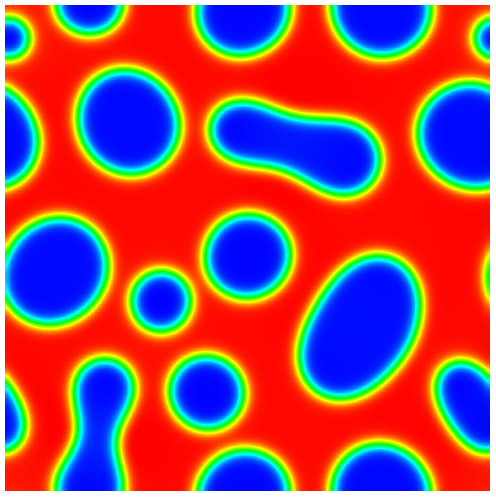}

}

\subfigure[The profile of $\phi$ at $t=0.1$ using various time step sizes: $\Delta t= 5 \times 10^{-4}, 2.5 \times 10^{-4}, 1.25 \times 10^{-5}$ with the DIRK4th method.]{
\includegraphics[width=0.22\textwidth]{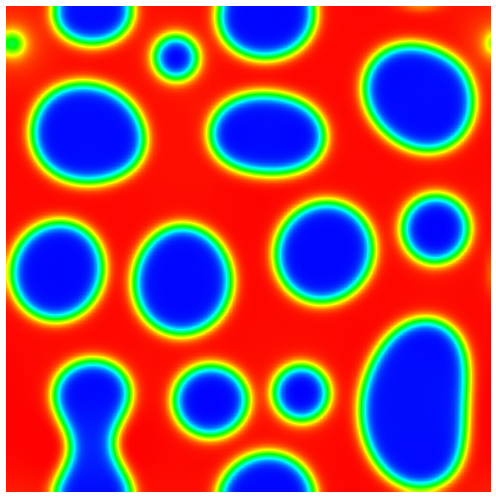}
\includegraphics[width=0.22\textwidth]{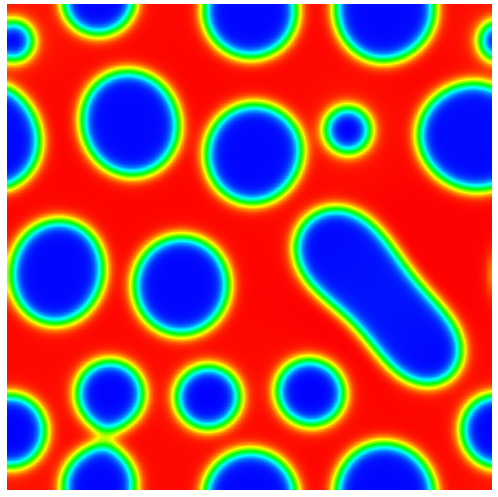}
\includegraphics[width=0.22\textwidth]{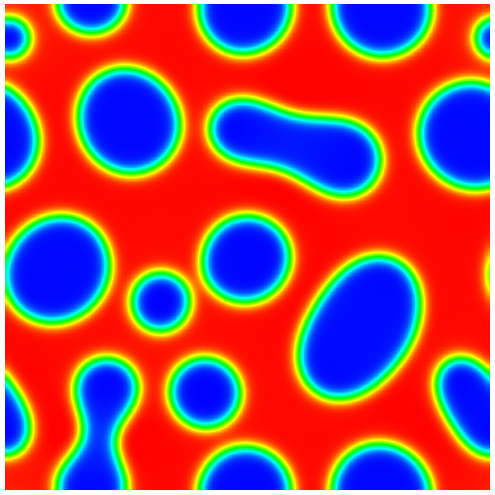}
}

\subfigure[The profile of $\phi$ at $t=0.1$ using various time step sizes: $\Delta t= 5 \times 10^{-4}, 2.5 \times 10^{-4}, 1.25 \times 10^{-5}$ with the Gauss4th method.]{
\includegraphics[width=0.22\textwidth]{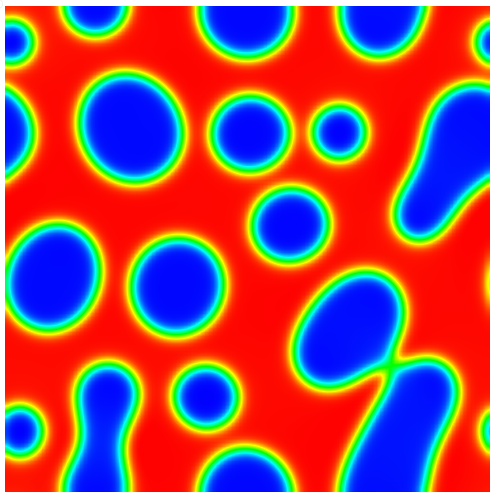}
\includegraphics[width=0.22\textwidth]{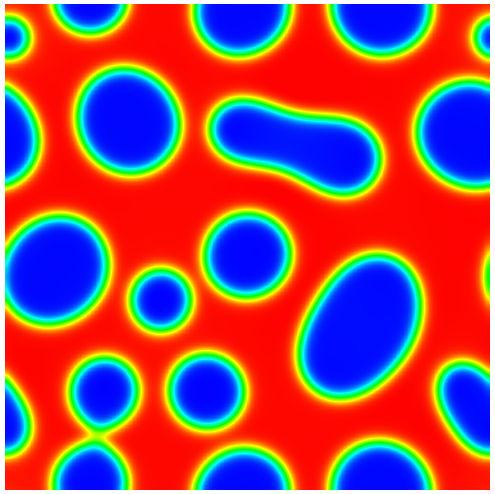}
\includegraphics[width=0.22\textwidth]{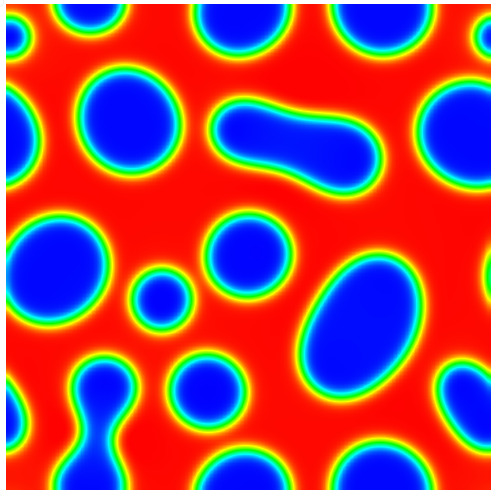}
}
\caption{A comparison of the three schemes on predicting accurate Cahn-Hilliard dynamics with random initial conditions. These figures show the numerical results of $\phi$ at time $t=0.1$ using different schemes with various time steps. The last sub-figure in each row indicates the maximum possible time step size to predict correct dynamics for the corresponding scheme. The Gauss4th scheme performs the best.}
\label{fig:CH_random}
\end{figure}

\subsection{Molecular beam epitaxial growth model}
In this subsection, we focus on the molecular beam epitaxial growth model with slope selection given as follows
\beq\label{MBE}
\phi_t  =-\lambda\Big( \varepsilon^2\Delta^2 \phi - \nabla \cdot \big( (|\nabla \phi|^2-1)\nabla \phi \big) \Big),
\eeq
where the free energy functional is given by 
\beq
F = \dfrac{\varepsilon^2}{2}\|\Delta \phi\|^2+\frac{1}{4} \big\||\nabla\phi|^2-1\big\|^2.
\eeq
We let $q = \frac{1}{2}\big(|\nabla\phi|^2-1-\gamma\big)$ and rewrite the energy functional as
\beq
\cF = \frac{1}{2} \Big(\phi, \varepsilon^2\Delta^2 \phi - \gamma\Delta\phi \Big) + \|q\|^2 - \frac{\gamma^2+2\gamma}{4}|\Omega|.
\eeq
Using the EQ reformulation, we have the following equivalent system
\beq\label{MBE-EQ}
\begin{cases}
\phi_t = -\lambda\Big( \varepsilon^2\Delta^2 \phi - \gamma \Delta \phi - \nabla \cdot \big( 2q\nabla \phi \big) \Big),\\
q_t = \nabla\phi\cdot \nabla\phi_t,
\end{cases}
\eeq
with the consistent initial condition
\beq\label{MBE-IC}
\begin{cases}
\phi(t=0) = \phi_0,\\
q(t=0) = \frac{1}{2}\big(|\nabla\phi_0|^2-1-\gamma\big).
\end{cases}
\eeq
It is readily to show that  new system \eqref{MBE-EQ} obeys the following energy dissipation law
\beq
\frac{d \cF}{d t} = -\lambda\Big\| \varepsilon^2\Delta^2 \phi - \gamma \Delta \phi - \nabla \cdot \big( 2q\nabla \phi \big)\Big\|^2\leq 0.
\eeq

Applying the LEQRK-PC scheme for system \eqref{MBE-EQ}, we have the following scheme. 

\begin{scheme} \label{scheme:LEQRK-PC-MBE}
Let $b_i$, $a_{ij}$ ($i,j = 1,\cdots,s$) be real numbers and let $c_i = \sum\limits_{j=1}^s a_{ij}$. For given $(\phi^n, q^n)$ and $\Phi_N(t_n+c_i \Delta t), Q_N(t_n+c_i \Delta t),  \forall i$, the following intermediate values are first calculated by the following prediction-correction strategy
\begin{enumerate}
\item  Prediction: we set $\Phi_i^{n,0} = \Phi_N(t_n+c_i \Delta t),$ $Q_i^{n,0} = Q_N(t_n+c_i \Delta t)$ and $M>0$ as a given integer. For $m=0$ to $M-1$, we compute $\Phi_i^{n,m+1}$, $k_i^{n,m+1}$, $l_i^{n,m+1}$, $Q_i^{n,m+1}$ using
\beq \label{pre_RK_stage-MBE}
\left\{
\bea{l}
\Phi_i^{n,m+1} =  \phi^n +  \Delta t \sum\limits_{j=1}^s a_{ij} k_j^{n,m+1},\\
k_i^{n,m+1} =  -\lambda\Big( \varepsilon^2\Delta^2 \Phi_i^{n,m+1} - \gamma \Delta \Phi_i^{n,m+1} - \nabla \cdot \big( 2Q_i^{n,m} \nabla \Phi_i^{n,m} \big) \Big),\\
l_i^{n,m+1} = \nabla \Phi_i^{n,m+1}\cdot \nabla k_i^{n,m+1},\\
Q_i^{n,m+1} = q^n +  \Delta t \sum\limits_{j=1}^s a_{ij} l_j^{n,m+1},
\eea
\right.
\quad i = 1,\cdots,s.
\eeq
Given the error tolerance $TOL>0$, if  $\max\limits_{i}\|\Phi^{n,m+1}_{i}-\Phi^{n,m}_{i}\|_\infty < TOL$, we stop the iteration and  set $\Phi^{n,*}_{i} = \Phi_i^{n,m+1}$; otherwise, we set $\Phi^{n,*}_{i} = \Phi_i^{n,M}$.

\item  Correction: for the predicted $\Phi^{n,*}_{i}$, we compute the intermediate values $\Phi_i^n, Q_i^n, k_i^n, l_i^n$ via
\beq \label{Cor_RK_stage-MBE}
\left\{
\bea{l}
\Phi_i^n =  \phi^n +  \Delta t \sum\limits_{j=1}^s a_{ij} k_j^n,\\
Q_i^n = q^n +  \Delta t \sum\limits_{j=1}^s a_{ij} l_j^n,\\
k_i^n = -\lambda\Big( \varepsilon^2\Delta^2 \Phi_i^{n} - \gamma \Delta \Phi_i^{n} - \nabla \cdot \big( 2Q_i^{n} \nabla \Phi_i^{n,*} \big) \Big), \\
l_i^n = \nabla \Phi_i^{n,*}\cdot \nabla k_i^{n},
\eea
\right.
\quad i = 1,\cdots,s.
\eeq
\end{enumerate}
Then $(\phi^{n+1}, q^{n+1})$ is updated via
\ben
&& \phi^{n+1} = \phi^n +  \Delta t \sum\limits_{i=1}^s b_i k_i^n,\\
&& q^{n+1} = q^n +  \Delta t \sum\limits_{i=1}^s b_i l_i^n.
\een
\end{scheme}

We apply the proposed arbitrarily high order schemes to solve MBE model \eqref{MBE}. We repeat the time step refinement test first. Here we use domain $[0 \,\,\, 2\pi]^2$ and choose parameters $\lambda=0.01$, $\gamma =1$ and $\varepsilon=1$. By adding the proper force term on the right-hand side of the equation, we create the real solution
\beq
\phi(x,y,t) = \sin(x)\sin(y) \cos(t).
\eeq
for the MBE model. Then we solve the modified model in the domain with a periodic boundary using the pseudo-spectral method for spatial discretization on $128^2$ meshes. The $L^2$ errors and $L^\infty$ errors using different schemes and various time steps are summarized in Figure \ref{fig:MBE-error}. Here we observe similar results, i.e., the DIRK4th scheme reaches 2nd order accuracy without prediction, but obtain 4th order accuracy with only two iteration steps for both the $L^2$ and $L^\infty$ norms. This is due to the low-order approximation for extrapolating the explicit terms. Analogously, the Gauss4th scheme is 3rd order accurate without any prediction steps and reaches 4th order accuracy with one iteration step.

\begin{figure}
\center

\subfigure[$L^2$ and $L^\infty$ errors using the DIRK4th scheme]{
\includegraphics[width=0.45\textwidth]{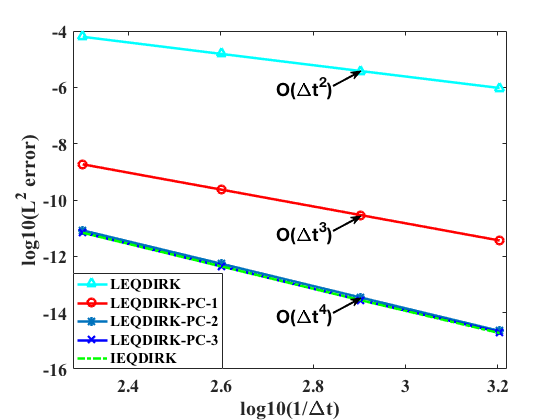}
\includegraphics[width=0.45\textwidth]{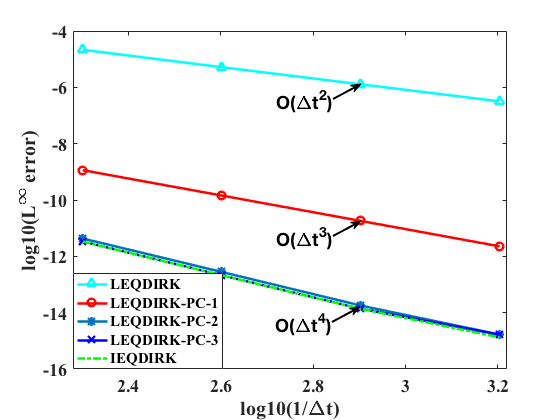}
}

\subfigure[$L^2$ and $L^\infty$ errors using the Gauss4th scheme]{
\includegraphics[width=0.45\textwidth]{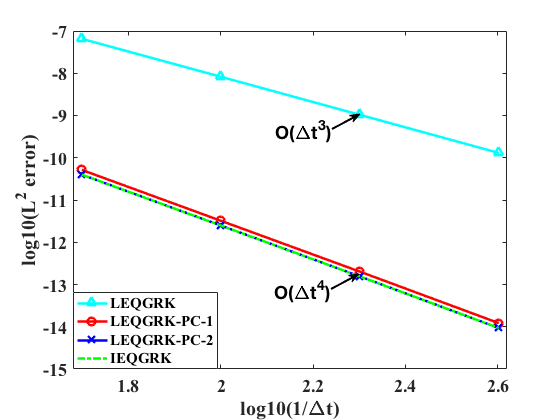}
\includegraphics[width=0.45\textwidth]{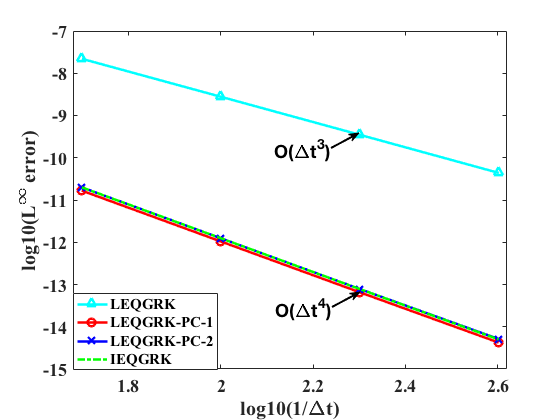}
}
\caption{Time step refinement tests with the proposed numerical schemes for the MBE model.}
\label{fig:MBE-error}
\end{figure}

To compare the accuracy of the DIRK4th scheme with that of the Gauss4th scheme in solving the molecular beam epitaxy (MBE) model, we summarize their $L^2$ and $L^\infty$ errors in the same figure as shown in \ref{fig:MBE-error-2}. We observe that the Gauss4th method has smaller errors than the DIRK4th scheme if using the same time step sizes.

\begin{figure}
\subfigure[$L^2$ errors]{
\includegraphics[width=0.45\textwidth]{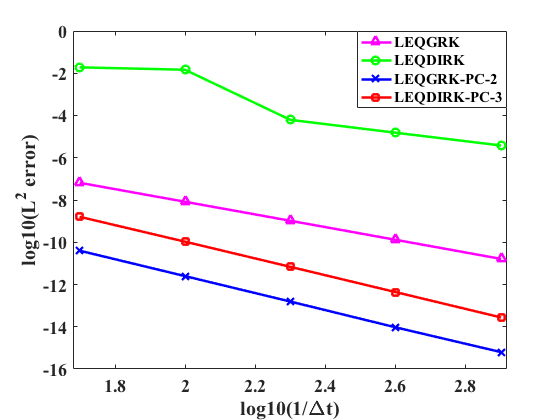}}
\subfigure[$L^\infty$ errors]{
\includegraphics[width=0.45\textwidth]{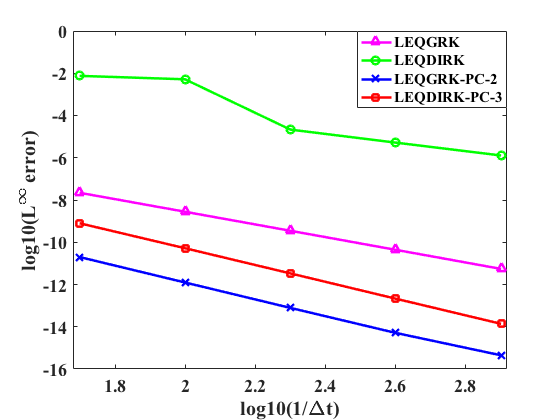}
}
\caption{$L^2$ and $L^\infty$ errors using the DIRK4th method and the Gauss4th method for the MBE model.}
\label{fig:MBE-error-2}
\end{figure}

Next, we use the proposed DIRK4th and Gauss4th schemes to solve two benchmark problems associated to the MBE model \eqref{MBE}. As before, we introduce the 2nd-order convex splitting scheme
\beq \label{eq:MBE-CS}
\frac{\phi^{n+1}-\phi^n}{\delta t} = -\lambda\Big( \varepsilon^2\Delta^2 \phi^{n+\frac{1}{2}} -  \frac{1}{2} \nabla \cdot \big( (|\nabla \phi^{n+1}|^2 +|\nabla \phi^n|^2) \nabla \phi^{n+\frac{1}{2}} \big) + \Delta (\frac{3}{2}\phi^n-\frac{1}{2}\phi^{n-1})  \Big),
\eeq
which will be used for comparison with the proposed linear high-order schemes.  

Following \cite{Wang&Wang&WiseDCDS2010}, we choose the domain as $[0 \,\,\, 2\pi]^2$, parameters $\lambda=1$, $\varepsilon^2=0.1$, and $\gamma = 1$.  We solve the MBE model in a periodic domain using the pseudo-spectral method with $128^2$ meshes. All the numerical schemes (i.e., the DIRK4th, Gauss4th, and 2nd CS scheme) are implemented. Five prediction iterations are used for both the DIRK4th and Gauss4th scheme.  The energy from $t=0$ to $t=15$ are calculated with different time steps and  the results are summarized in Figure \ref{fig:MBE-benchmark}.
We observe the maximum time steps to obtain accurate solutions are $\Delta t = 0.015625$ for 2nd order convex splitting scheme, $\Delta t = 0.0025$ for LEQDIRK-PC-5, and $\Delta t = 0.015625$ for LEQGRK-PC-5.

We emphasis that, using the 2nd-order convex splitting scheme  \eqref{eq:MBE-CS} for the MBE model, nonlinear equations have to be solved at each time step, but DIRK and Gauss scheme are all linear and easy to implement. In addition, for the 2nd order convex splitting scheme, there are no theoretical proofs for energy dissipation, but the high-order linear schemes introduced in this paper all guarantee energy dissipation laws.
\begin{figure}

\center
\subfigure[Convex splitting scheme]{
\includegraphics[width=0.31\textwidth]{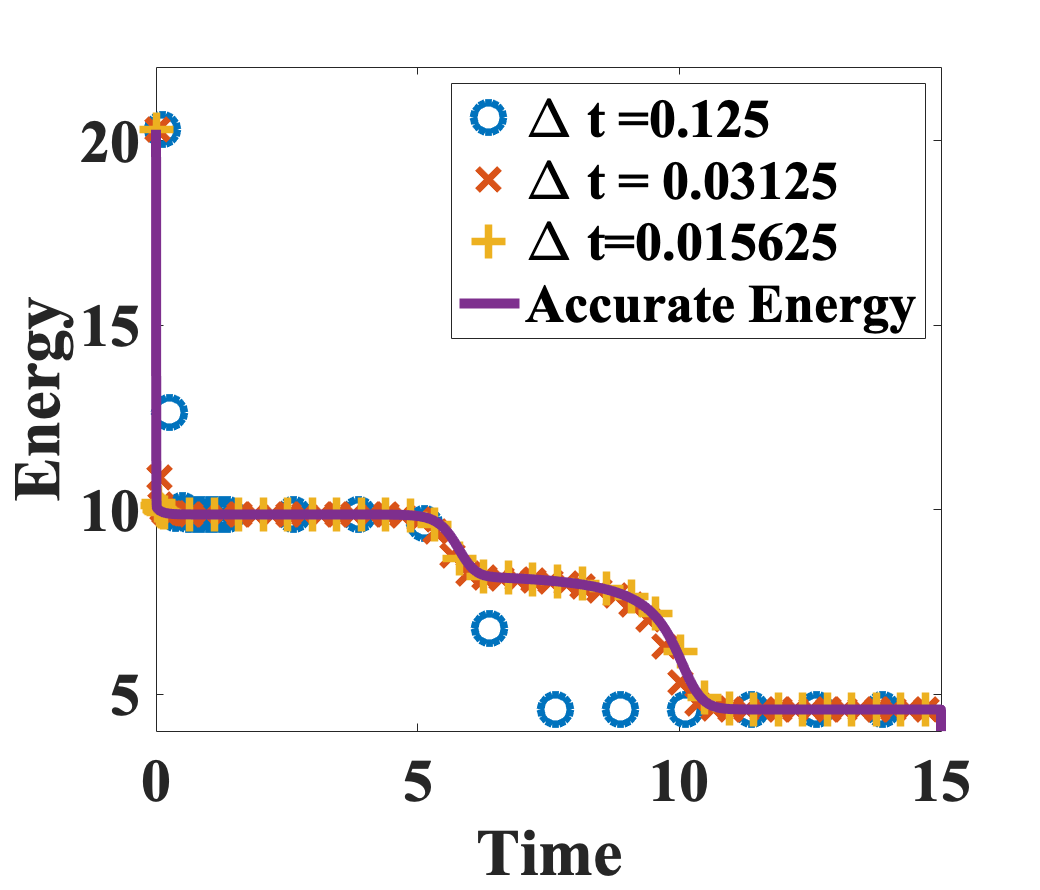}
}
\subfigure[LEQDIRK-PC-5]{
\includegraphics[width=0.31\textwidth]{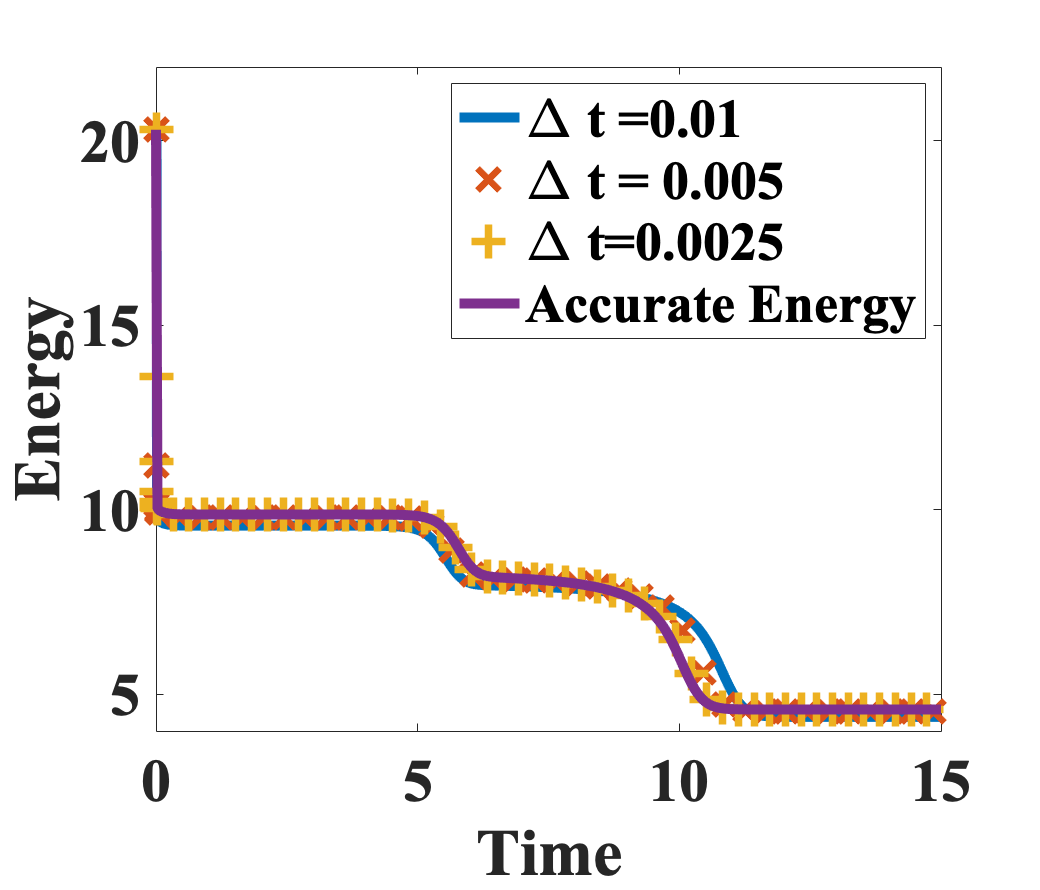}
}
\subfigure[LEQGRK-PC-5]{
\includegraphics[width=0.31\textwidth]{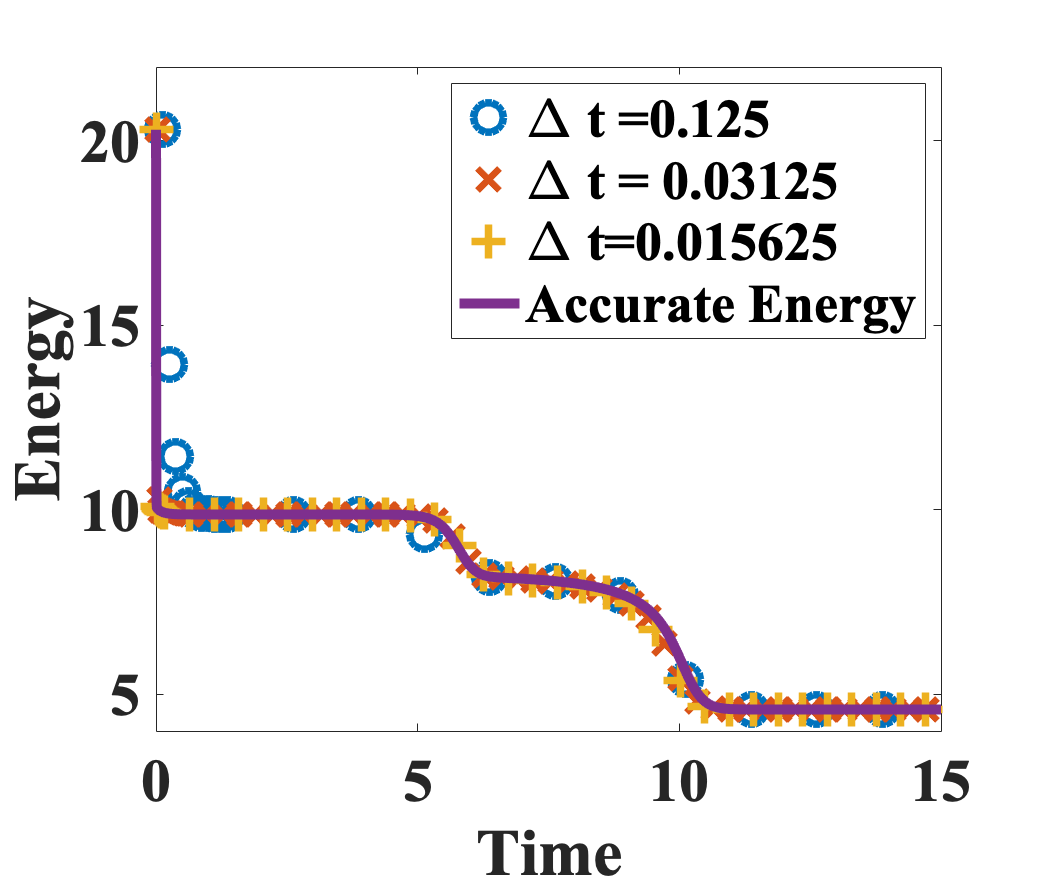}
}

\caption{Numerical results of energy evolution for the MBE model using different schemes with various time steps.}
\label{fig:MBE-benchmark}
\end{figure}

With the proposed high-order schemes, we can easily solve the MBE model with relatively larger time steps in most cases, making simulating long-time dynamics practical. Here we give an additional example as an illustration. We choose the domain as $[0 \,\,\, 12.8]^2$ and $\varepsilon=0.03$. The rest parameters are the same as in previous examples. We use $256^2$ meshes. It is known that the MBE coarsening dynamics follows a power law, where the energy decreases as $O(t^{-\frac{1}{3}})$, and the roughness increases as $O(t^{\frac{1}{3}})$ \cite{Wang&Wang&WiseDCDS2010}. The numerical results are summarized in Figure \ref{fig:MBE-long_time_e_r}, showing a strong agreement with the expected power law.
\begin{figure}

\center

\subfigure[Energy]{
\includegraphics[width=0.45\textwidth]{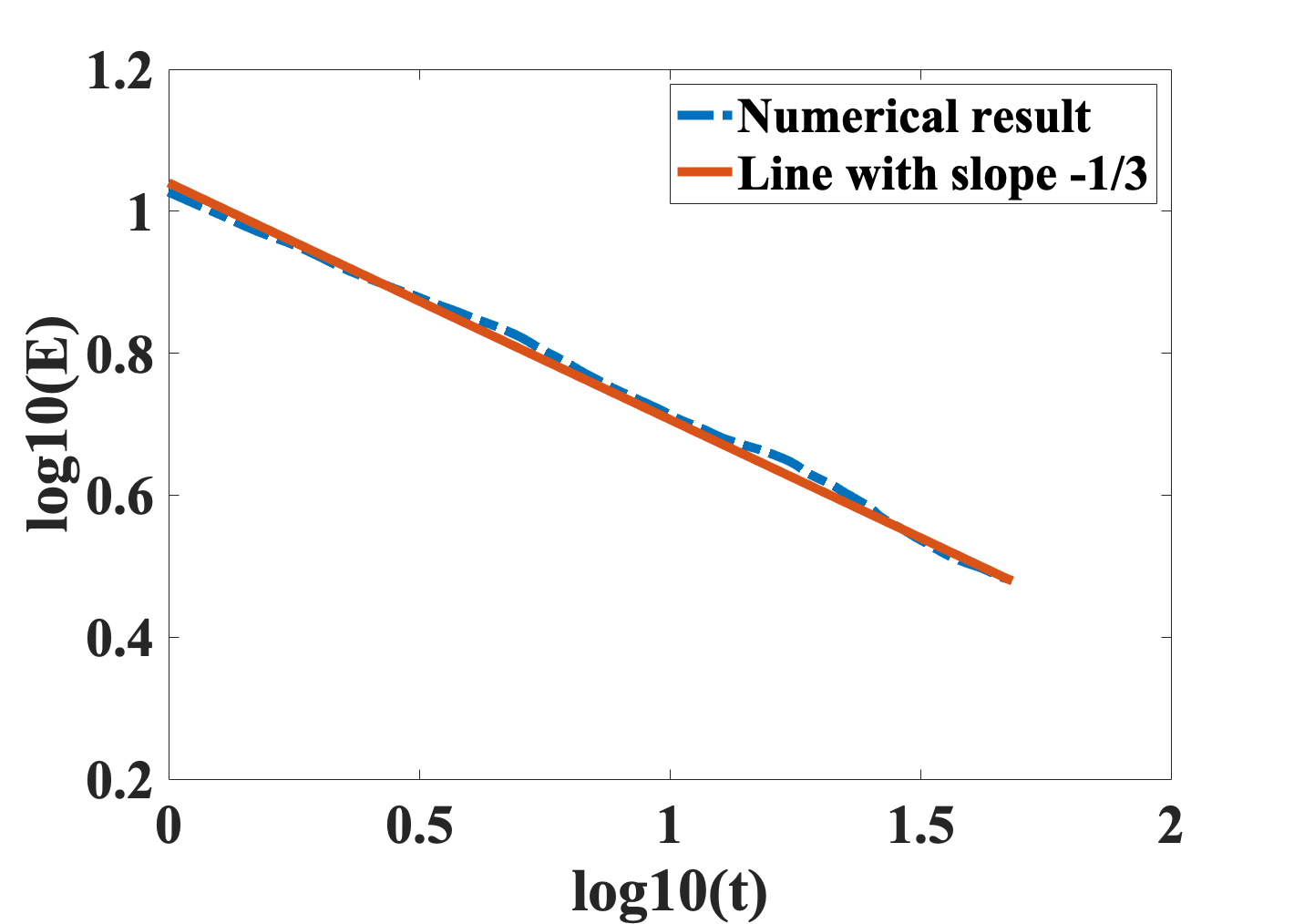}
}
\subfigure[Rougness]{
\includegraphics[width=0.45\textwidth]{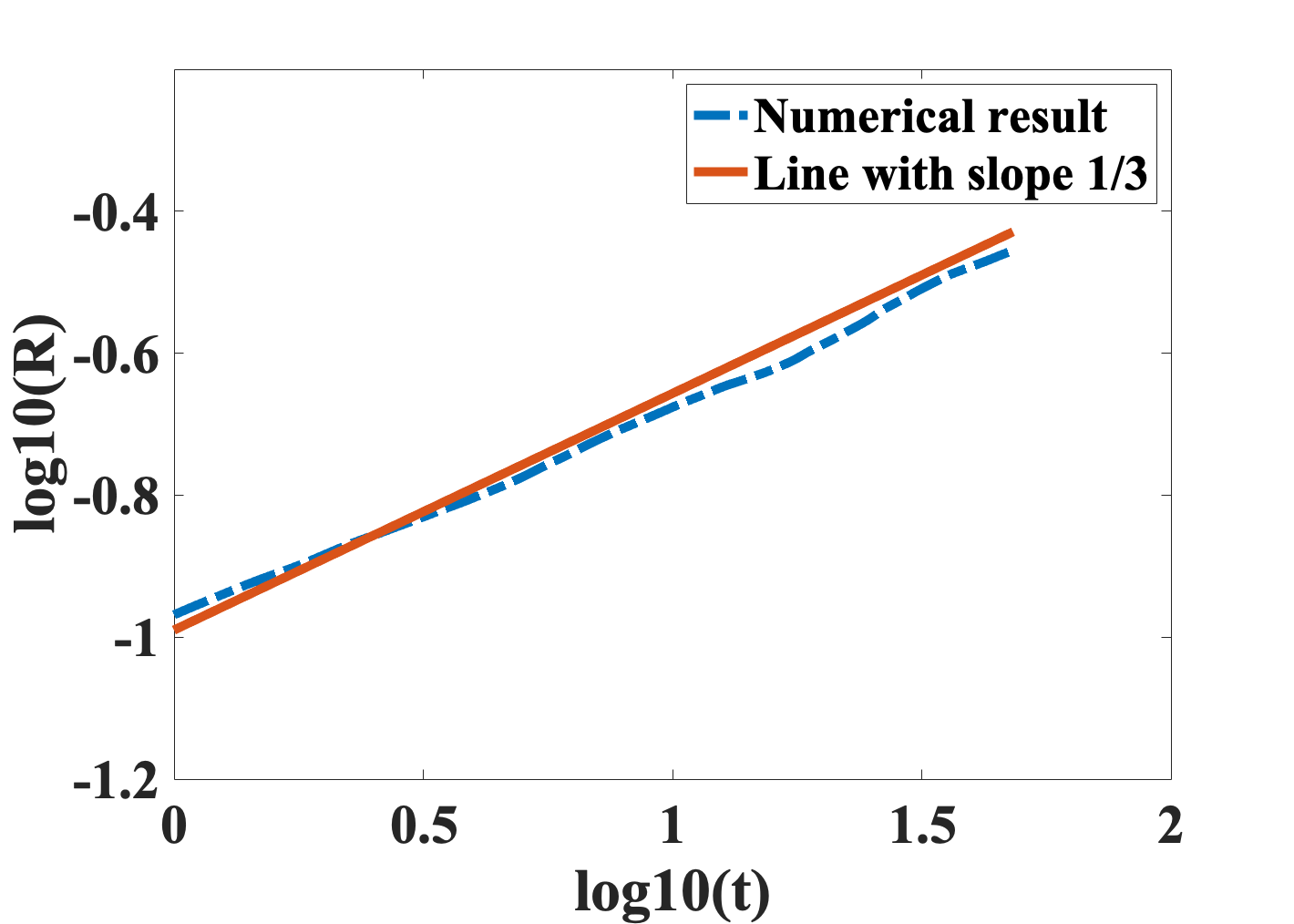}
}

\caption{The numerical results show proper power law dynamics for  the decreasing energy as $O(t^{-1/3})$ and increasing roughness as $O(t^{1/3})$.}
\label{fig:MBE-long_time_e_r}
\end{figure}

The profile of $\phi$ and $\Delta \phi$ at different times are summarized in  Figure \ref{fig:MBE-long_time_phi} and \ref{fig:MBE-long_time_lapphi}, respectively. These profiles look qualitatively similar to the reported results. These results strongly support our claim that the general arbitrarily high order linear schemes can be applied to predict accurate dynamics for the MBE model.

\begin{figure}
\center
\subfigure[$\phi$ at $t=0$]{\includegraphics[width=0.24\textwidth]{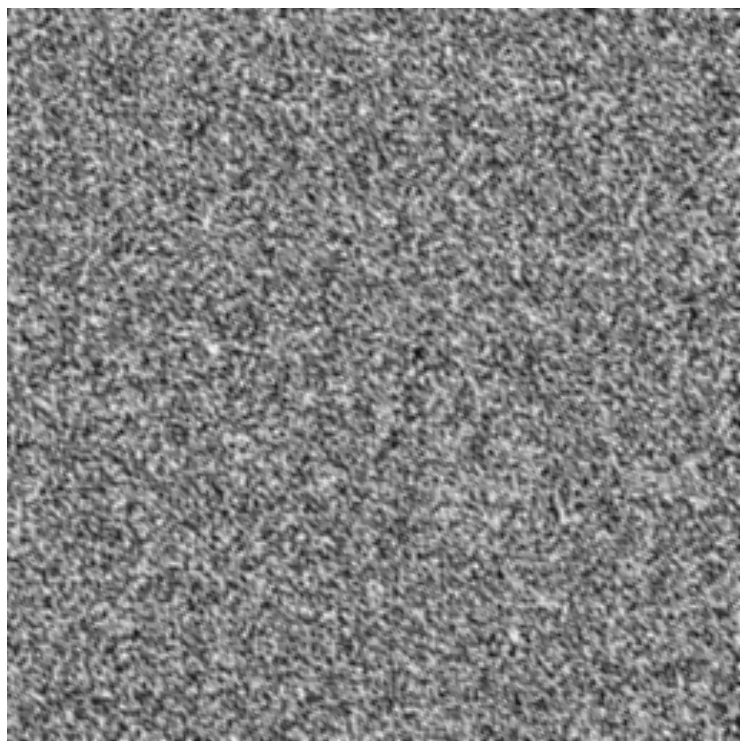}}
\subfigure[$\phi$ at $t=5$]{\includegraphics[width=0.24\textwidth]{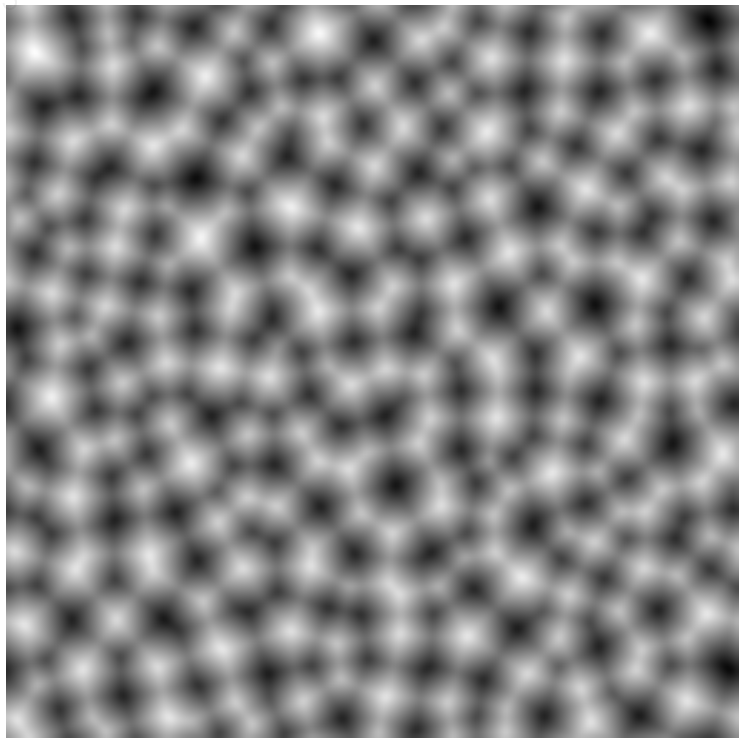}}
\subfigure[$\phi$ at $t=10$]{\includegraphics[width=0.24\textwidth]{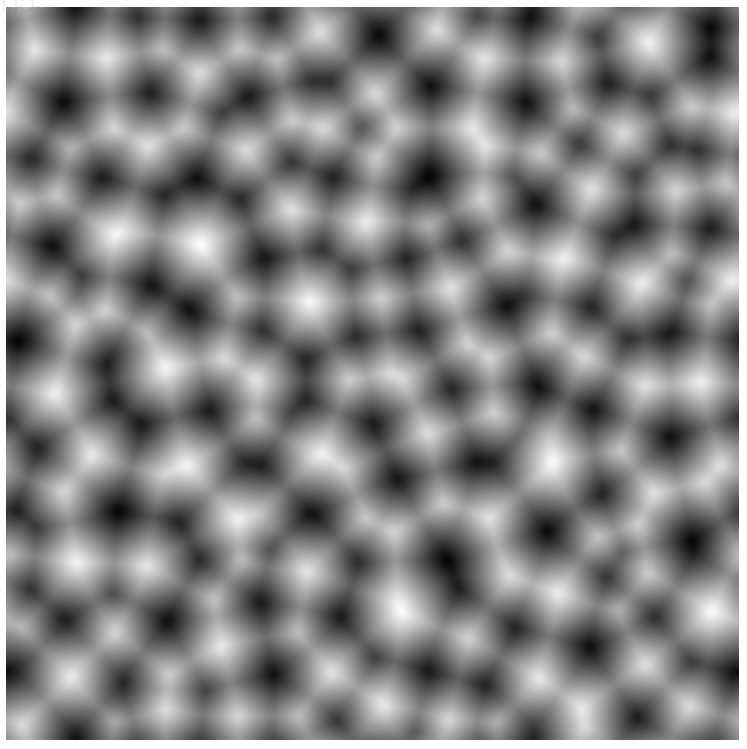}}

\subfigure[$\phi$ at $t=20$]{\includegraphics[width=0.24\textwidth]{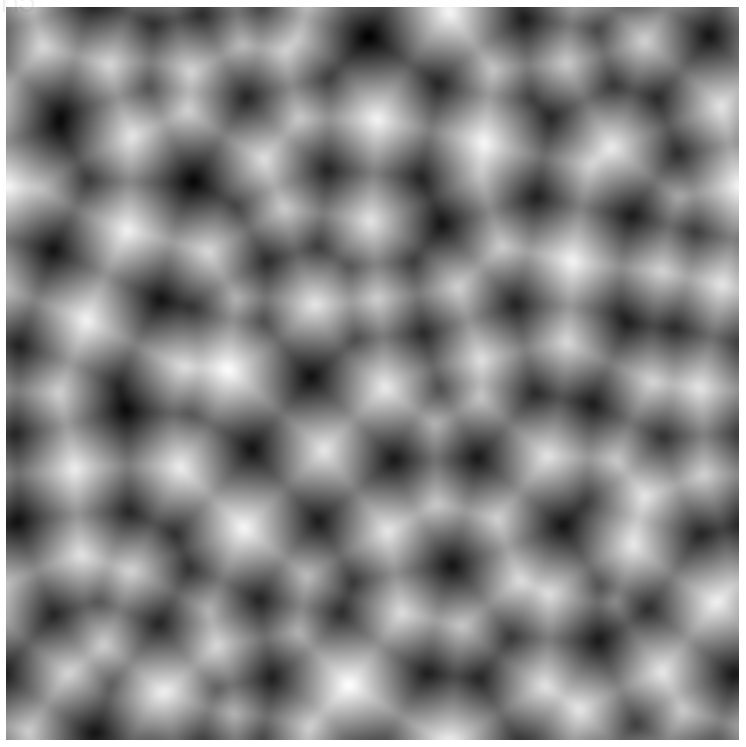}}
\subfigure[$\phi$ at $t=30$]{\includegraphics[width=0.24\textwidth]{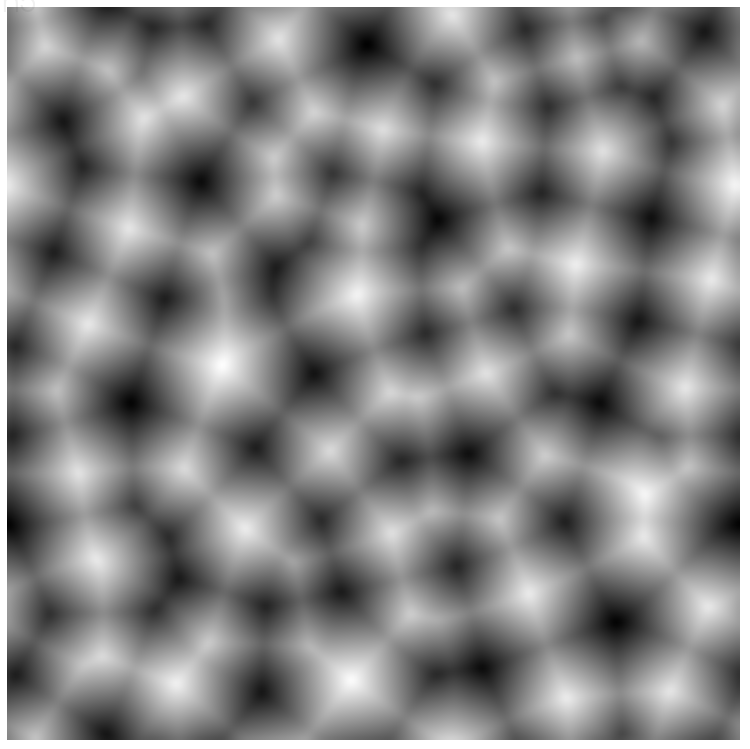}}
\subfigure[$ \phi$ at $t=45$]{\includegraphics[width=0.24\textwidth]{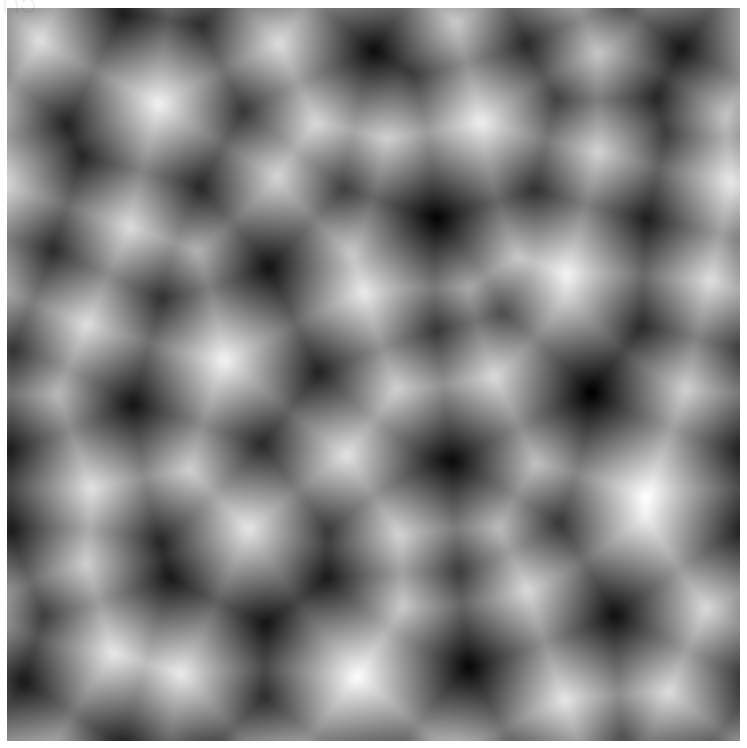}}
\caption{The profile of $\phi$ at different times.}
\label{fig:MBE-long_time_phi}
\end{figure}

\begin{figure}
\center
\subfigure[$\Delta \phi$ at $t=0$]{\includegraphics[width=0.24\textwidth]{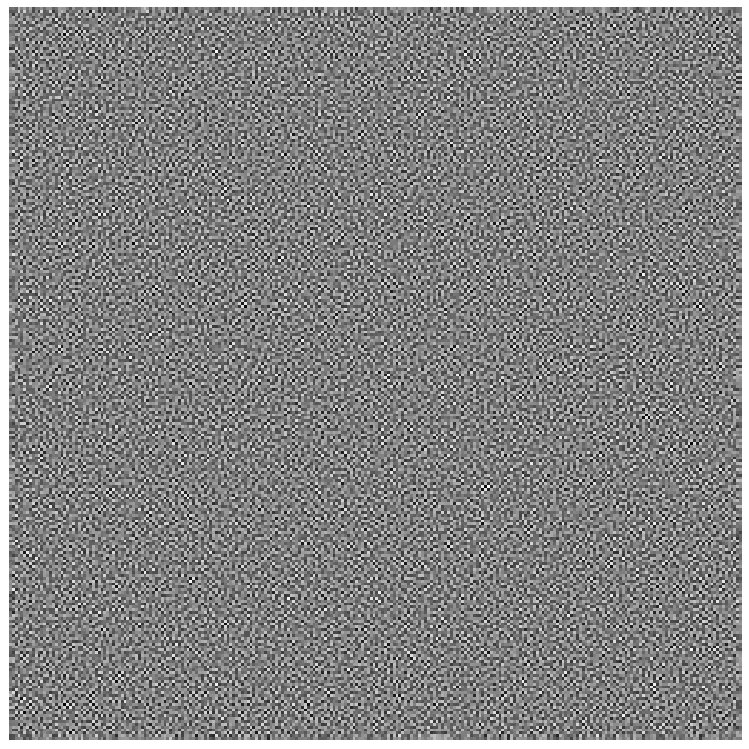}}
\subfigure[$\Delta \phi$ at $t=5$]{\includegraphics[width=0.24\textwidth]{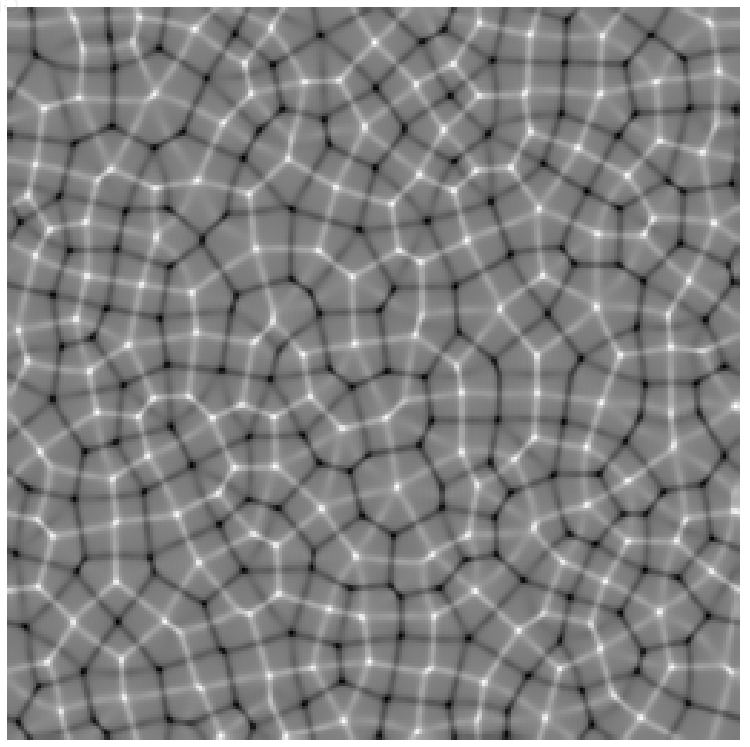}}
\subfigure[$\Delta \phi$ at $t=10$]{\includegraphics[width=0.24\textwidth]{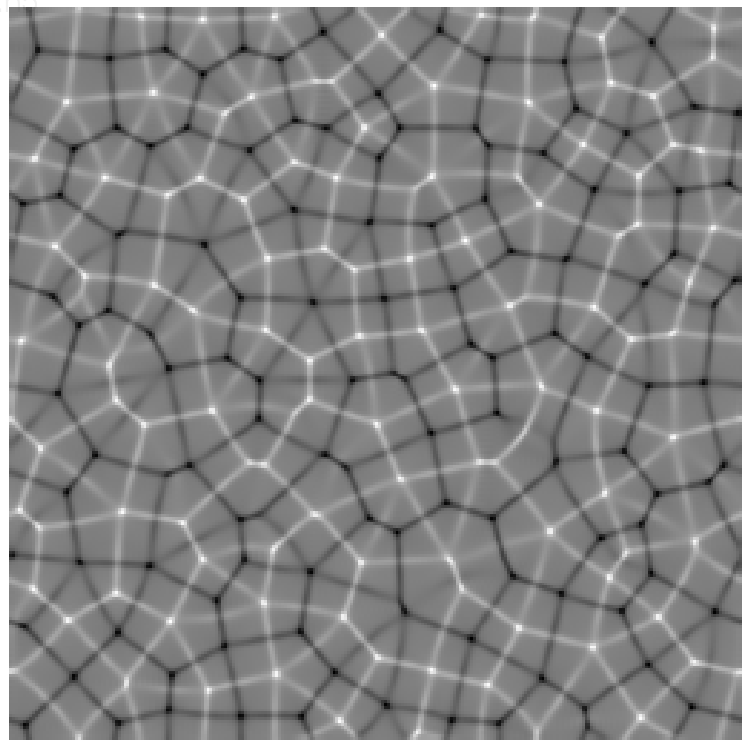}}

\subfigure[$\Delta \phi$ at $t=20$]{\includegraphics[width=0.24\textwidth]{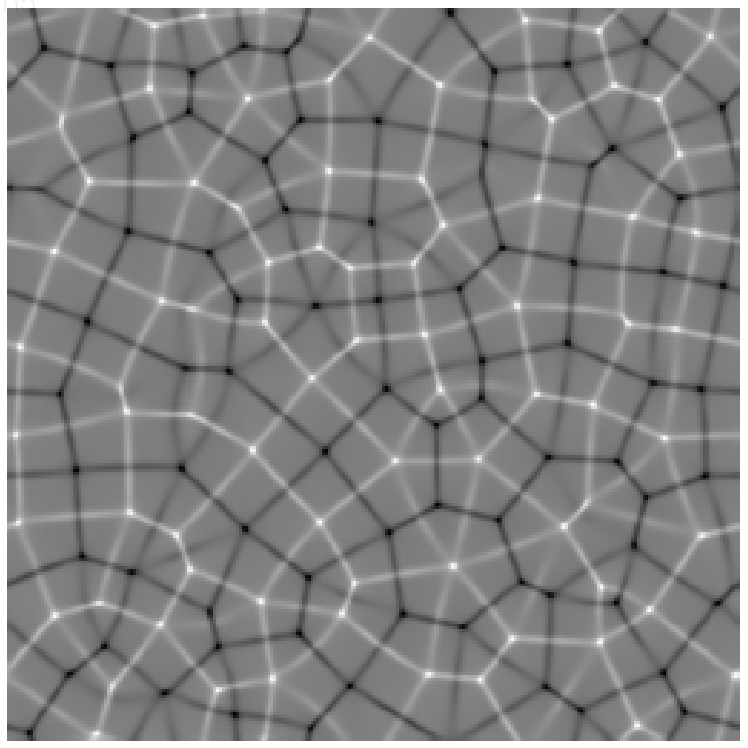}}
\subfigure[$\Delta \phi$ at $t=30$]{\includegraphics[width=0.24\textwidth]{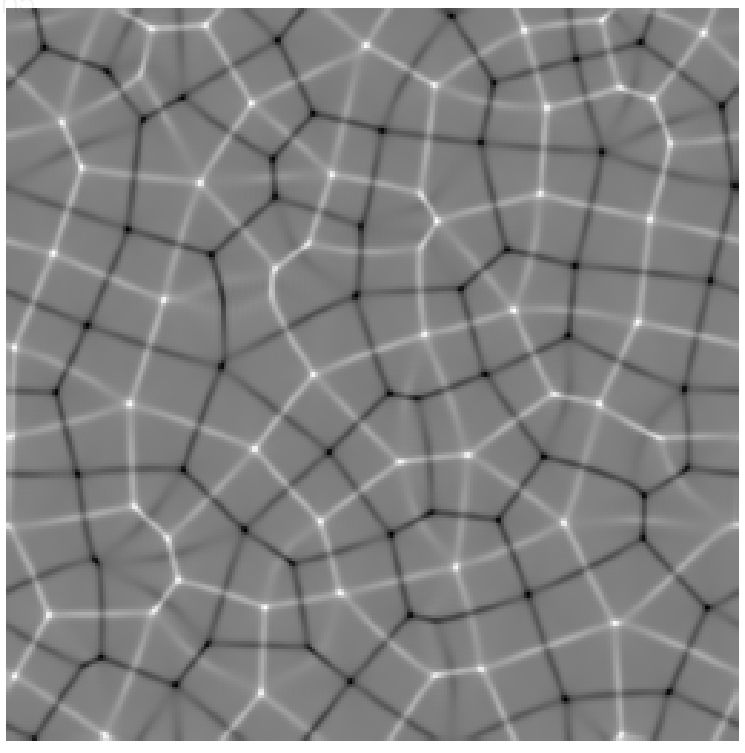}}
\subfigure[$\Delta \phi$ at $t=45$]{\includegraphics[width=0.24\textwidth]{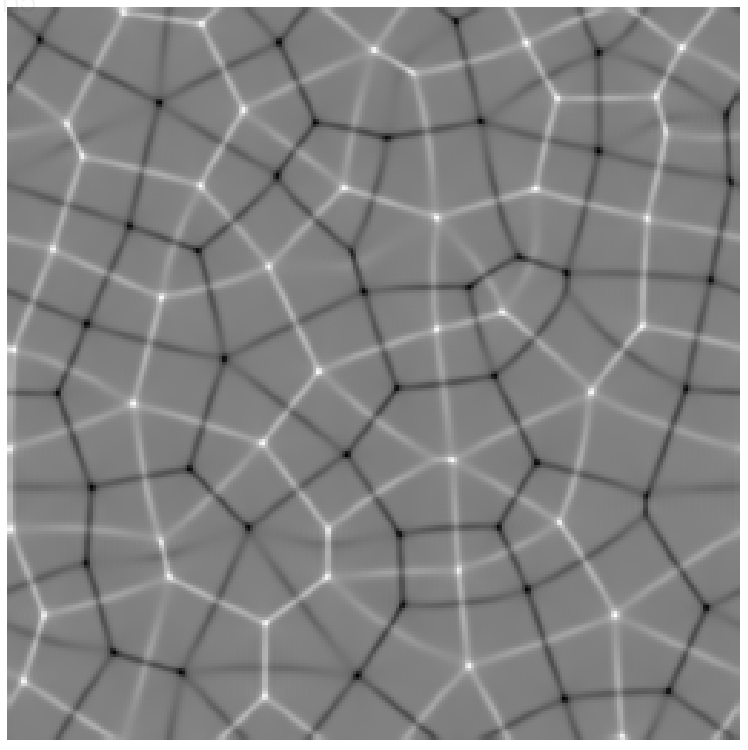}}

\caption{The profile of $\Delta \phi$ at different times.}
\label{fig:MBE-long_time_lapphi}
\end{figure}

\section{Conclusions}\label{sect:con}
In this paper, we present a new paradigm for developing arbitrarily high order, fully discrete numerical algorithms. These newly proposed algorithms have several advantageous properties: (1) the schemes are all linear such that they are easy to implement and computationally efficient; (2) the schemes are unconditionally energy stable and uniquely solvable such that large time steps can be used in some  long-time dynamical simulations; (3) the schemes can reach arbitrarily high-order of accuracy spatial-temporally such that relatively large meshes can  guarantee the desired accuracy of numerical solutions; (4) the schemes do not depend on the specific expression of the free energy explicitly such that it can be readily applied to a large class of general gradient flow models. The proofs for energy stability and uniquely solvability are given, and numerical tests with benchmark problems are shown to illustrate the effectiveness of the proposed schemes.

\section*{Acknowledgments}
Yuezheng Gong's work is partially supported by the Natural Science Foundation of
Jiangsu Province (Grant No. BK20180413) and the National Natural Science Foundation of China (Grant No. 11801269).
Jia Zhao's work is partially supported by National Science Foundation under grant number NSF DMS-1816783; and National Institutes of Health under grant number R15-GM132877.  Jia Zhao would also like to acknowledge NVIDIA Corporation for their donation of a  Quadro P6000 GPU for conducting some of the numerical simulations in this paper.
Qi Wang's work is partially supported by  DMS-1815921,  OIA-1655740 and a GEAR award from SC EPSCoR/IDeA Program.  Supports by
NSFC awards \#11571032, \#91630207, and NSAF-U1930402 are also acknowledged.


\end{document}